\newtheorem{theorem}{Theorem}[section]
\newtheorem{lemma}{Lemma}[section]
\theoremstyle{definition}
\newtheorem{definition}{Definition}[section]
\theoremstyle{remark}
\newtheorem{example}{Example}[section]
\DeclareMathOperator*{\esssup}{ess\,sup}
\numberwithin{equation}{section}
\begin{document}
	
	\begin{frontmatter}
		
		\title{Necessary first and second order optimality conditions for a fractional order differential equation with state delay}
		
		

			\author[]{Jasarat J. Gasimov}
		\ead{jasarat.gasimov@emu.edu.tr}
		\author[]{Nazim I. Mahmudov}
		\ead{nazim.mahmudov@emu.edu.tr}
		\cortext[cor1]{Corresponding author}
		\cortext[cor1]{Corresponding author}	
		
		\address{Department of Mathematics, Eastern Mediterranean University, Mersin 10, 99628, T.R. North Cyprus, Turkey}

		
		\begin{abstract}
			\noindent
			 In this research paper, we examine an optimal control problem involving a dynamical system governed by
			a nonlinear Caputo fractional time-delay state equation. The primary objective of this study is to obtain
			the necessary conditions for optimality, both the first and second order, for the Caputo fractional time-delay
			optimal control problem. We derive the first-order necessary condition for optimality for the given fractional
			time-delay optimal control problem. Moreover, we focus on a case where the Pontryagin maximum principle
			degenerates, meaning that it is satisfied in a tivial manner. Consequently, we proceed to derive the second
			order optimality conditions specific to the problem under investigation. At the end illustrative examples are
			provided.
		\end{abstract}

		\begin{keyword}
			retarded differential equation; a constant delay; pontryagin maximum principle; optimal control
		\end{keyword}  
		
	\end{frontmatter}
	\section{Introduction}\label{Sec:intro}
	Fractional differential equations have attracted significant attention in the last 30 years \cite{2,3,4,5,6,7,16,22,24,29,35,36,37}. This increased interest can be attributed to the advancements in fractional calculus theory and its diverse applications. One area of active research involves studying optimal control problems for systems incorporating fractional Riemann-Liouville and Caputo derivatives. These problems have wide-ranging applications in various fields such as biology [32], chemistry [9], economics [14], electrical engineering [17], and medicine [18]. It is worth noting that fractional optimal control problems, which are described by ordinary fractional differential equations, can be considered as an extension of traditional optimal control problems.
	
	Fractional optimal control problems, which can be represented by ordinary fractional differential equations, can be considered as an advancement or an expansion of conventional optimal control problems.
	
	The Pontryagin maximum principle is a key outcome in the theory of essential optimality conditions at the first order. It was originally demonstrated in [21] for optimal control problems involving ordinary differential equations. Subsequently, different essential optimality conditions have been derived for a range of systems, encompassing both first and higher order conditions [10, 21, 20, 33, 34].
	
	An admissible control is referred to as a Pontryagin extremal if it satisfies the Pontryagin maximum condition. The maximum principle implies that any optimal control is a Pontryagin extremal. Thus, the problem of finding the optimal control is essentially reduced to selecting the best control among the Pontryagin extremals. If there exists an optimal control in the optimization problem and the Pontryagin extremal is unique, then it is the optimal control. However, it is important to note that even in simple cases, non-optimal controls can satisfy the Pontryagin maximum principle. In such situations, the maximum principle cannot effectively eliminate all non-optimal controls.
	
	The process of narrowing down a subset of Pontryagin extremals that can be deemed optimal is a crucial and challenging task. Its complexity stems from the fact that the maximum principle itself serves as a very strong necessary condition for optimality.
	
	Consequently, investigating scenarios where the Pontryagin maximum principle fails to provide a unique determination of the optimal control has become a key focus in the advancement of optimal systems theory. Therefore, the need arises to develop new necessary conditions for optimality that can narrow down the set of controls that satisfy the Pontryagin maximum principle.
	
	The paper [38] addresses the minimization problem of an integral functional with a non-convex control-dependent integrand. The focus is on solutions of a Riemann-Liouville fractional differential control system with mixed non-convex feedback constraints on the control. In [39], optimal control problems of fractional order are examined, where the dynamic control system involves derivatives of both integer and fractional orders, and the final time is not fixed. Necessary optimality conditions are derived for the triplet of state/control/terminal time. The cases with finite time constraints are also considered. It is demonstrated that, under suitable assumptions, the derived necessary optimality conditions become sufficient. In the paper [35], a necessary first-order optimality condition is obtained in the form of the Pontryagin maximum principle. Additionally, in the case of degenerate maximum principle, a necessary optimality condition is derived for singular controls according to the Pontryagin maximum principle. It should be noted that these results are obtained using the fundamental matrix. If the Pontryagin maximum principle holds along the control trajectory without degeneracy, then the findings of [8, 35] consider this control as a potential candidate for optimality. Hence, there is a need for new necessary optimality conditions that can effectively single out this control from the set of candidates for optimality.
	
	This paper presents one of the potential approaches for establishing higher-order necessary optimality conditions for fractional optimal control problems based on the Caputo sense.
	
	We investigate an optimal control problem for a dynamic system governed by a nonlinear differential equation, incorporating the Caputo fractional derivative of order $\alpha \in(0, 1). $The control process occurs within a fixed and finite time interval. The objective is to minimize a Bolza-type cost functional consisting of two components. The first component assesses the state vector of the system at a predetermined terminal time T, while the second component involves the integral evaluation of a control over the entire time interval [0, T]. Notably, the second term corresponds to the Riemann-Liouville integral of order $\beta \geq \alpha.$ The set of control functions assumes nonempty and compact values.
	
	The presented optimal control problem is investigated using a novel variation of the increment method, which extensively employs the concept of an adjoint integral equation. It is worth noting that the solution of this integral equation does not exhibit any singularity.

	\section{Preliminaries}
In this section, we will begin by providing concise explanations of key terms derived from fractional calculus that hold significance in this research.

	Let $R^n$ and $R^{n \times n}$ denote the spaces of $n$-dimensional vectors and $n \times n$ matrices, respectively, and let $E \in R^{n \times n}$ be the identity matrix. A norm is defined in $R^n$ by $\Vert \cdot \Vert$, and in $R^{n \times n}$ by an appropriate norm for matrices. Consider the intervals $[0,T] \subset R$ and $[-h,T] \subset R$, where $T > 0$ and $h > 0$, and Let $Y$ be either $R^n$ or $R^{n \times n}$.
	
	In the space $L^\infty([0,T],Y)$, we have a set of Lebesgue measurable functions $\varphi: [0,T] \rightarrow Y$ such that $\Vert \varphi (\cdot) \Vert_{[0,T]} < \infty$. Here, $\Vert \varphi (\cdot) \Vert_{[0,T]}$ is defined as the essential supremum of the norm of $\varphi(t)$ over all $t \in [0,T]$. Essentially, it measures the largest possible norm value attained by $\varphi(t)$ in that interval.
	
	Furthermore, in the space $C([0,T],Y)$, we have a set of functions $\varphi: [0,T] \rightarrow Y$ such that $\Vert \varphi (\cdot) \Vert = \max_{t \in [0,T]} \Vert \varphi(t) \Vert$. In other words, it represents the maximum norm value achieved by $\varphi(t)$ over the interval $[0,T]$.
	
	We also have the space of summable functions denoted as $L^1([0,T],Y)$, which encompasses functions that are integrable over the interval $[0,T]$ with respect to the appropriate norm in $Y$.

	\begin{definition}(\cite{29})
		The frectional integral of order $\alpha>0$ for a function $\varphi:[0,T]\rightarrow Y$ is defined by 
		\begin{equation*}
			(I^{\alpha}_{0+}\varphi)(t)=\frac{1}{\Gamma (\alpha)}\int_{0}^{t}(t-\tau)^{\alpha-1}\varphi(\tau)d\tau, \quad t>0,
		\end{equation*}
	where $\Gamma(\cdot)$ is the well-known Euler's gamma function.
	\end{definition}
\begin{definition}(\cite{29})
	The Riemann-Liouville frectional derivative of order $0<\alpha\leq1$ for a function $\varphi(\cdot)\in L^{\infty}([0,T],Y)$ is defined by 
	\begin{equation*}
		( ^{RL}D^{\alpha}_{0+}\varphi)(t)=\frac{1}{\Gamma (1-\alpha)}\frac{d}{dt}\int_{0}^{t}(t-\tau)^{-\alpha}\varphi(\tau)d\tau, \quad t>0,
	\end{equation*}
\end{definition}
\begin{definition}(\cite{29})
	The Caputo frectional derivative of order $0<\alpha\leq1$ for a function $\varphi(\cdot)\in L^{\infty}([0,T],Y)$ is defined by 
	\begin{equation*}
		( ^{C}D^{\alpha}_{0+}\varphi)(t)=\frac{1}{\Gamma (1-\alpha)}\int_{0}^{t}(t-\tau)^{-\alpha}\frac{d}{d\tau}\varphi(\tau)d\tau, \quad t>0,
	\end{equation*}
\end{definition}
The relationship between the Caputo and Riemann-Liouville fractional differentiation operators for the function $\varphi(\cdot)\in L^{\infty}([0,T],Y)$ can be described as follows:
\begin{equation*}
^{C}D^{\alpha}_{0+}\varphi(t)= ^{RL}D^{\alpha}_{0+}\big(\varphi(t)-\varphi(0)\big), \quad \alpha\in (0,1].
\end{equation*}
We denote
$$
AC^{\alpha}_{\infty}([0,T],Y)=\left\{y(\cdot)| y:[0,T]\rightarrow Y, y(t)=y(0)+(I^{\alpha}_{0+}\varphi)(t), \quad t\in[0,T]\right\}
$$
\begin{lemma}(\cite{29})
For any $y(t)\in AC^{\alpha}_{\infty}([0,T],Y),$ the value $( ^{C}D^{\alpha}_{0+}y)(t)$ is correctly defined for a.e. $t\in [0,T].$ Furthermore, the inclusion $( ^{C}D^{\alpha}_{0+}y)(\cdot)\in L^{\infty}([0,T],Y)$ holds (i.e., there exists $\varphi(\cdot)\in L^{\infty}([0,T],Y)$ such that $\varphi(t)=( ^{C}D^{\alpha}_{0+}y)(t)$ for a.e. $t\in[0,T]$) and 
$$
(I^{\alpha}_{0+}( ^{C}D^{\alpha}_{0+}y)(t))(t)=y(t)-y(0), \quad t\in[0,T].
$$
\end{lemma}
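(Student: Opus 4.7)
The plan is to start by unpacking the definition of $AC^\alpha_\infty([0,T],Y)$: by hypothesis there exists some $\varphi(\cdot)\in L^\infty([0,T],Y)$ such that $y(t)=y(0)+(I^\alpha_{0+}\varphi)(t)$ on $[0,T]$. With that representation in hand, the identity we want is equivalent to showing that $(^C D^\alpha_{0+}y)(t)=\varphi(t)$ for a.e.\ $t\in[0,T]$, and everything else (measurability, essential boundedness, and the integral identity at the end of the lemma) follows immediately: essential boundedness inherits from $\varphi\in L^\infty$, and applying $I^\alpha_{0+}$ to both sides of $(^C D^\alpha_{0+}y)(t)=\varphi(t)$ yields $(I^\alpha_{0+}(^C D^\alpha_{0+}y))(t)=(I^\alpha_{0+}\varphi)(t)=y(t)-y(0)$.

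Next I would use the relation $^C D^\alpha_{0+}\varphi= {}^{RL}D^\alpha_{0+}(\varphi-\varphi(0))$ stated just before the lemma. Since $y(t)-y(0)=(I^\alpha_{0+}\varphi)(t)$ vanishes at $t=0$, this reduces the task to proving the classical left-inverse identity
\begin{equation*}
(^{RL}D^\alpha_{0+}\,I^\alpha_{0+}\varphi)(t)=\varphi(t)\qquad\text{for a.e.\ } t\in[0,T].
\end{equation*}
I would establish this by inserting the definition of $I^\alpha_{0+}\varphi$ into the Riemann--Liouville formula, exchanging the order of integration via Fubini (which is justified because $\varphi\in L^\infty$ and the kernel is locally integrable), and evaluating the inner integral $\int_s^t(t-\tau)^{-\alpha}(\tau-s)^{\alpha-1}\,d\tau$ through the Beta-function substitution $\tau=s+(t-s)u$, which produces $B(\alpha,1-\alpha)=\Gamma(\alpha)\Gamma(1-\alpha)$. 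The telescoping of the gamma factors then reduces $^{RL}D^\alpha_{0+}\,I^\alpha_{0+}\varphi$ to $\tfrac{d}{dt}\int_0^t\varphi(s)\,ds$, and the Lebesgue differentiation theorem gives $\varphi(t)$ almost everywhere.

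Combining the steps yields $(^C D^\alpha_{0+}y)(t)=\varphi(t)$ a.e., so this function lies in $L^\infty([0,T],Y)$, and applying $I^\alpha_{0+}$ produces the claimed identity $(I^\alpha_{0+}(^C D^\alpha_{0+}y))(t)=y(t)-y(0)$ for every $t\in[0,T]$, the pointwise (rather than a.e.) statement being legitimate because $I^\alpha_{0+}$ of an $L^\infty$ function is continuous on $[0,T]$ and vanishes at $t=0$.

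The main obstacle is the Fubini--Beta computation that establishes $^{RL}D^\alpha_{0+}\,I^\alpha_{0+}\varphi=\varphi$ a.e.; the swap of integrals requires an absolute-integrability check and the differentiation under the integral sign produces a non-smooth integrand, so I would lean on the Lebesgue differentiation theorem at the last step rather than attempt a classical derivative. This identity is standard in fractional calculus (it is exactly the content of the semigroup/left-inverse theorem found in the reference \cite{29} cited in the excerpt), so in a more compact write-up it could simply be invoked by name, with the reduction to it being the only nontrivial manipulation.
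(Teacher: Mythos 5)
Your argument is correct, but note that the paper itself offers no proof of this lemma at all: it is quoted verbatim from the reference \cite{29}, so there is no in-paper argument to compare against. Your reduction --- unpack $y(t)=y(0)+(I^{\alpha}_{0+}\varphi)(t)$, pass to the Riemann--Liouville derivative via the relation $^{C}D^{\alpha}_{0+}y={}^{RL}D^{\alpha}_{0+}(y-y(0))$ stated just before the lemma, and then verify the left-inverse identity $^{RL}D^{\alpha}_{0+}I^{\alpha}_{0+}\varphi=\varphi$ a.e.\ by Fubini plus the Beta integral $\int_{0}^{1}(1-u)^{-\alpha}u^{\alpha-1}\,du=\Gamma(\alpha)\Gamma(1-\alpha)$ and the Lebesgue differentiation theorem --- is exactly the standard proof found in that reference, and every step you flag (absolute integrability for Fubini, which follows from $\varphi\in L^{\infty}$ and local integrability of the kernel; the a.e.\ differentiation of $\int_{0}^{t}\varphi(s)\,ds$; the upgrade of the final identity to all $t$ because $I^{\alpha}_{0+}$ is insensitive to null sets) checks out. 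The one point worth making explicit in a write-up is that the Caputo derivative as literally defined in Definition 2.3 requires $\frac{d}{d\tau}y(\tau)$ to exist, which a general element of $AC^{\alpha}_{\infty}$ need not satisfy; your (implicit) choice to take the Riemann--Liouville relation as the operative definition of $^{C}D^{\alpha}_{0+}y$ on this class is the right reading and is what makes the phrase ``correctly defined for a.e.\ $t$'' meaningful.
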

\begin{lemma}(\cite{29})
	If $\alpha\in (0,1)$ and $\varphi(\cdot)\in L^{1}([0,T],Y),$ then $(D^{\alpha}_{0+}( ^{C}I^{\alpha}_{0+}y)(t))(t)=\varphi(t),$ $\quad$ a.e. $t\in[0,T].$
\end{lemma}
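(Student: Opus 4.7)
The statement (modulo the apparent typographical slip in the composition, which should read $\bigl({}^{RL}D^{\alpha}_{0+}(I^{\alpha}_{0+}\varphi)\bigr)(t)=\varphi(t)$ a.e. on $[0,T]$) is the classical assertion that the Riemann--Liouville fractional derivative is a left inverse of the Riemann--Liouville fractional integral on $L^{1}$. My plan is to reduce it to the semigroup property of the integration operators and then invoke the Lebesgue differentiation theorem.

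First I would write out the composition explicitly using Definition 2.1 and Definition 2.2. By definition,
\begin{equation*}
\bigl({}^{RL}D^{\alpha}_{0+}(I^{\alpha}_{0+}\varphi)\bigr)(t)=\frac{1}{\Gamma(1-\alpha)}\frac{d}{dt}\int_{0}^{t}(t-\tau)^{-\alpha}(I^{\alpha}_{0+}\varphi)(\tau)\,d\tau = \frac{d}{dt}\bigl(I^{1-\alpha}_{0+}I^{\alpha}_{0+}\varphi\bigr)(t).
\end{equation*}
So everything boils down to proving the semigroup identity $I^{1-\alpha}_{0+}I^{\alpha}_{0+}\varphi = I^{1}_{0+}\varphi$ for $\varphi\in L^{1}([0,T],Y)$, and then differentiating.

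Second, I would establish $I^{\alpha}_{0+}I^{\beta}_{0+}\varphi = I^{\alpha+\beta}_{0+}\varphi$ for $\alpha,\beta>0$ and $\varphi\in L^{1}$. The argument is a direct application of Fubini's theorem: starting from
\begin{equation*}
\bigl(I^{\alpha}_{0+}I^{\beta}_{0+}\varphi\bigr)(t)=\frac{1}{\Gamma(\alpha)\Gamma(\beta)}\int_{0}^{t}\!(t-\tau)^{\alpha-1}\!\int_{0}^{\tau}(\tau-s)^{\beta-1}\varphi(s)\,ds\,d\tau,
\end{equation*}
interchanging the order of integration (justified since the integrand is integrable in $(s,\tau)$ by Tonelli applied to $|\varphi|$), and making the substitution $\tau=s+(t-s)u$ one obtains the Beta-function integral $\int_{0}^{1}(1-u)^{\alpha-1}u^{\beta-1}du=\Gamma(\alpha)\Gamma(\beta)/\Gamma(\alpha+\beta)$; the constants collapse to produce exactly $I^{\alpha+\beta}_{0+}\varphi(t)$. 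Specializing to $\beta=\alpha$, $\alpha\mapsto 1-\alpha$ gives $I^{1-\alpha}_{0+}I^{\alpha}_{0+}\varphi=I^{1}_{0+}\varphi$.

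Third, since $\varphi\in L^{1}([0,T],Y)$, the function $t\mapsto I^{1}_{0+}\varphi(t)=\int_{0}^{t}\varphi(s)\,ds$ is absolutely continuous, and by the Lebesgue differentiation theorem its derivative equals $\varphi(t)$ for almost every $t\in[0,T]$. Combining this with the computation of the first step yields the desired identity a.e.

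The main obstacle is the Fubini/Beta-function manipulation in step two; everything else is either definitional rewriting or a citation of the Lebesgue differentiation theorem. The integrability required to apply Fubini needs a brief justification using the fact that $(t-\tau)^{\alpha-1}(\tau-s)^{\beta-1}$ is nonnegative and the iterated integral of $|\varphi|$ is finite, which follows from $\varphi\in L^{1}$ together with the boundedness of $I^{\alpha+\beta}_{0+}|\varphi|$ on compacts.
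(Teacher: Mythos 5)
The paper offers no proof of this lemma at all---it is quoted directly from \cite{29}---so the only meaningful comparison is with the standard argument in that reference, which is exactly what you give. Your proof (read the garbled statement as $({}^{RL}D^{\alpha}_{0+}I^{\alpha}_{0+}\varphi)(t)=\varphi(t)$ a.e., reduce to the semigroup identity $I^{1-\alpha}_{0+}I^{\alpha}_{0+}\varphi=I^{1}_{0+}\varphi$ via Tonelli/Fubini and the Beta integral, then differentiate the absolutely continuous function $t\mapsto\int_{0}^{t}\varphi(s)\,ds$ using the Lebesgue differentiation theorem) is correct and complete.
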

	\begin{lemma}\label{Lmm}(\cite{35})
		Let $Y$ be Banach space and suppose $x\in C([-h,T],Y)$ satisfies the following inequality 
		$$\Vert y(t)\Vert\leq a(t)+b\int_{0}^{t}(t-s)^{(\beta-1)}\Vert y(s)\Vert ds+c\int_{0}^{t}(t-s)^{(\beta-1)}\Vert y_{h}\Vert_{C} ds \quad 0\leq t\leq T, h>0$$
		$$y(t)=\psi(t) \quad -h\leq t\leq0$$
		$\psi\in C, 0<\beta\leq1 and constants, a,b,c\geq0.$ Then there exists constant $M>0$ (indepent on $a$ and $\psi$) such that 
		$$\Vert y(t)\Vert\leq M(a(t)+\Vert\psi\Vert_{C}) \quad 0\leq t\leq T.$$
	\end{lemma}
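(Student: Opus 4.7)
The plan is to reduce the delay-type integral inequality to a standard fractional Henry--Gronwall inequality by introducing an auxiliary nondecreasing envelope. Define
\[
v(t) := \sup_{\tau \in [-h,\,t]} \Vert y(\tau)\Vert, \qquad t \in [0,T],
\]
so that $v$ is nondecreasing, $v(0) = \Vert \psi \Vert_C$, and one automatically has $\Vert y(t)\Vert \leq v(t)$ as well as $\Vert y_s\Vert_C \leq v(s)$, because the delay segment norm is a supremum of $\Vert y\Vert$ over a subinterval of $[-h,s]$. Substituting both bounds into the hypothesis yields the simplified scalar inequality
\[
\Vert y(t)\Vert \leq a(t) + (b+c) \int_0^t (t-s)^{\beta-1} v(s)\, ds, \qquad t \in [0, T].
\]

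The next step is to upgrade this pointwise estimate to one for the envelope $v$ itself. To justify taking the running supremum on the right-hand side, I first check that the fractional convolution is nondecreasing in its upper limit: the substitution $s = \sigma \tau$ rewrites $\int_0^\tau (\tau-s)^{\beta-1}v(s)\,ds$ as $\tau^\beta \int_0^1 (1-\sigma)^{\beta-1} v(\sigma\tau)\,d\sigma$, which is visibly nondecreasing in $\tau$ since $v$ is. Combining this monotonicity with the trivial bound $v(t) \leq \Vert \psi\Vert_C + \sup_{\tau \in [0,t]} \Vert y(\tau)\Vert$, and writing $\tilde a(t) := \sup_{\tau \leq t} a(\tau)$ for the nondecreasing majorant of $a$, I arrive at
\[
v(t) \leq \Vert \psi \Vert_C + \tilde a(t) + (b+c)\int_0^t (t-s)^{\beta-1} v(s)\, ds.
\]

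This is exactly the standard scalar fractional Henry--Gronwall inequality, whose Mittag--Leffler type iteration produces a constant $M > 0$, depending only on $b+c$, $\beta$, and $T$, such that $v(t) \leq M(\tilde a(t) + \Vert \psi \Vert_C)$ on $[0,T]$. Since $\Vert y(t)\Vert \leq v(t)$, and since the inequality is used in contexts where $a$ is nondecreasing (so $\tilde a = a$), the claim of the lemma follows at once. I expect the main technical point to be the monotonicity argument that permits replacing the pointwise quantity $\Vert y(t)\Vert$ by the envelope $v(t)$ on the left-hand side; once this is in place, both the handling of the delay term $\Vert y_s\Vert_C$ and the application of the fractional Gronwall inequality are routine.
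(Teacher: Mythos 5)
The paper itself offers no proof of this lemma; it is imported verbatim from reference [35] with only the citation, so there is no in-paper argument to compare against. Judged on its own terms, your proof is correct and follows the standard route for delay-type Henry--Gronwall inequalities: the nondecreasing envelope $v(t)=\sup_{\tau\in[-h,t]}\Vert y(\tau)\Vert$ absorbs both the pointwise term $\Vert y(s)\Vert$ and the segment norm $\Vert y_s\Vert_C$ into a single kernel $(b+c)(t-s)^{\beta-1}v(s)$, the scaling substitution correctly establishes that $t\mapsto\int_0^t(t-s)^{\beta-1}v(s)\,ds$ is nondecreasing when $v$ is, and the resulting scalar inequality for $v$ is exactly the classical fractional Gronwall inequality whose Mittag--Leffler iteration yields $M$ depending only on $b+c$, $\beta$, $T$. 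Since $y\in C([-h,T],Y)$, the envelope $v$ is finite and measurable, so every step is legitimate.

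The one point worth flagging is the discrepancy you yourself note: your argument delivers $\Vert y(t)\Vert\le M(\tilde a(t)+\Vert\psi\Vert_C)$ with the nondecreasing majorant $\tilde a$, and this coincides with the stated conclusion only when $a(\cdot)$ is nondecreasing (or a constant, as the lemma's phrase ``constants $a,b,c\ge 0$'' suggests despite the notation $a(t)$). This is not a defect of your proof but of the lemma as stated: for a genuinely non-monotone $a(\cdot)$ with $b>0$ the conclusion with $a(t)$ in place of $\tilde a(t)$ is false (take $\psi=0$ and $a$ vanishing after some time; the convolution term keeps $\Vert y(t)\Vert$ positive where $a(t)=0$). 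It is worth being aware that in the paper's second invocation of this lemma the forcing term $a(t)=\frac{K_1(\alpha,\varepsilon)\varepsilon}{\Gamma(\alpha+1)}(t-(\theta+\varepsilon))^{\alpha-1}$ is \emph{decreasing} in $t$, so the monotonicity caveat is not vacuous in this very paper; your proof makes the correct hypothesis explicit where the source leaves it implicit.
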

Note:$\quad\Vert y_{h}\Vert_{C}=\sup_{0\leq\theta\leq h}\Vert y(t-\theta)\Vert.$
\begin{lemma}\label{Lmm1}(\cite{37})
	For an arbitrary function $a(\cdot)\in L^{\infty}(0,T),$ any of its Lebesgue points $\theta \in(0,T),$ and any numbers $\alpha,\varepsilon,\quad 0<\alpha<1, \quad 0<\varepsilon<T-\varepsilon,$ the equality 
	\begin{align*}
		\int_{\theta}^{\theta+\varepsilon}(T-t)^{\alpha-1}(t-\theta)^{\alpha}a(t)dt=\frac{(T-\theta)^{\alpha-1}a(\theta)}{\alpha+1}\varepsilon^{\alpha+1}+o(\varepsilon^{\alpha+1}),
	\end{align*}
holds, where $\lim_{\varepsilon\to0}\frac{o(\varepsilon^{\alpha+1})}{\varepsilon^{\alpha+1}}=0.$
\end{lemma}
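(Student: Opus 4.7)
The plan is to decompose the integral into a principal part that carries the leading $\varepsilon^{\alpha+1}$ asymptotics and a residual small enough to be absorbed into $o(\varepsilon^{\alpha+1})$. Specifically, I would write
\[
\int_\theta^{\theta+\varepsilon}(T-t)^{\alpha-1}(t-\theta)^{\alpha}a(t)\,dt = a(\theta)\int_\theta^{\theta+\varepsilon}(T-t)^{\alpha-1}(t-\theta)^{\alpha}dt + R(\varepsilon),
\]
with $R(\varepsilon):=\int_\theta^{\theta+\varepsilon}(T-t)^{\alpha-1}(t-\theta)^{\alpha}[a(t)-a(\theta)]\,dt$.

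For the principal part, the idea is to freeze $(T-t)^{\alpha-1}$ at its value at $t=\theta$. The mean value theorem gives $\bigl|(T-t)^{\alpha-1}-(T-\theta)^{\alpha-1}\bigr| \leq (1-\alpha)(T-\theta-\varepsilon)^{\alpha-2}(t-\theta)$ which, for $\varepsilon\leq (T-\theta)/2$, is bounded by $C\,\varepsilon$ with $C$ depending only on $T-\theta$ and $\alpha$. Consequently the replacement error is at most $|a(\theta)|\cdot C\,\varepsilon\cdot \int_\theta^{\theta+\varepsilon}(t-\theta)^{\alpha}dt = O(\varepsilon^{\alpha+2}) = o(\varepsilon^{\alpha+1})$, while the explicit evaluation $\int_\theta^{\theta+\varepsilon}(t-\theta)^{\alpha}dt = \varepsilon^{\alpha+1}/(\alpha+1)$ delivers the stated main term $\frac{(T-\theta)^{\alpha-1}a(\theta)}{\alpha+1}\varepsilon^{\alpha+1}$.

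For the residual $R(\varepsilon)$ I would invoke the Lebesgue-point hypothesis, which states that $\int_\theta^{\theta+\varepsilon}|a(t)-a(\theta)|\,dt = o(\varepsilon)$. On the integration interval the weight satisfies $(T-t)^{\alpha-1}\leq (T-\theta-\varepsilon)^{\alpha-1}\leq C'$ and $(t-\theta)^{\alpha}\leq \varepsilon^{\alpha}$, so
\[
|R(\varepsilon)| \leq C'\,\varepsilon^{\alpha}\int_\theta^{\theta+\varepsilon}|a(t)-a(\theta)|\,dt = \varepsilon^{\alpha}\cdot o(\varepsilon)=o(\varepsilon^{\alpha+1}).
\]
Adding the two contributions yields the claimed identity.

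Nothing in this argument is genuinely hard; the only care needed is to verify that both weights behave benignly for small $\varepsilon$, which is guaranteed by the interior condition $\theta\in(0,T)$ keeping $t$ bounded away from the singularity at $T$, and by the exponent $\alpha>0$ making $(t-\theta)^{\alpha}$ small. The main obstacle, such as it is, is purely bookkeeping: keeping track of the implicit constants in the two error estimates so that both are confirmed to be $o(\varepsilon^{\alpha+1})$ rather than merely $O(\varepsilon^{\alpha+1})$. A subtle point worth flagging is that the statement's restriction $0<\varepsilon<T-\varepsilon$ is not quite the right one (it should be $\varepsilon<T-\theta$, or even $\varepsilon\leq (T-\theta)/2$ for the mean-value bound); I would silently adopt the stronger restriction and note that for the asymptotic conclusion $\varepsilon\to 0$ this is immaterial.
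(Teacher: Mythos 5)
Your argument is correct. Note that the paper itself offers no proof of this lemma --- it is imported verbatim from \cite{37} --- so there is nothing internal to compare against; your decomposition into the frozen-coefficient principal term (with the $O(\varepsilon^{\alpha+2})$ replacement error from the mean value theorem) plus a residual controlled by the Lebesgue-point property is the standard and, as far as I can tell, the intended argument. Your side remark is also right: the hypothesis ``$0<\varepsilon<T-\varepsilon$'' as printed does not keep $\theta+\varepsilon$ away from $T$ and is evidently a typo for something like $0<\varepsilon<T-\theta$; since the conclusion is an asymptotic statement as $\varepsilon\to 0^{+}$, adopting the stronger restriction $\varepsilon\leq (T-\theta)/2$ costs nothing.
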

\begin{lemma}\label{Lmm2}(\cite{37})
	For an arbitrary function $a(\cdot)\in L^{\infty}(0,T),$ any of its Lebesgue point $\theta \in(0,T)$ of this function, the equality 
	\begin{align*}
	\lim_{t\to\theta;t\in(\theta,T)}\frac{1}{(t-\theta)^{\alpha}} \int_{\theta}^{t}(t-\tau)^{\alpha-1}(t-\theta)^{\alpha-1}\vert a(\tau)-a(\theta)\vert d\tau0, \quad 0<\alpha\leq 1,
	\end{align*}
	holds.
\end{lemma}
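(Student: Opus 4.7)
The plan is to reduce the integral to a $t$-independent weight via a scaling change of variables, then exploit the Lebesgue point property through a two-piece split of the integration interval that isolates the singularity of the weight from the smallness provided by the Lebesgue point. (I read the integrand as $(t-\tau)^{\alpha-1}|a(\tau)-a(\theta)|$; the additional $(t-\theta)^{\alpha-1}$ factor that appears in the statement seems to be a typographical duplication, since otherwise the product with $1/(t-\theta)^{\alpha}$ produces a factor that blows up as $t\to\theta^+$.)

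First, I would substitute $\tau = \theta + s(t-\theta)$ with $s\in[0,1]$, so that $t-\tau = (1-s)(t-\theta)$ and $d\tau=(t-\theta)\,ds$. The quantity to be estimated becomes
\begin{equation*}
\frac{1}{(t-\theta)^{\alpha}}\int_{\theta}^{t}(t-\tau)^{\alpha-1}|a(\tau)-a(\theta)|\,d\tau
= \int_{0}^{1}(1-s)^{\alpha-1}\bigl|a(\theta+s(t-\theta))-a(\theta)\bigr|\,ds.
\end{equation*}
The advantage is that the weight $(1-s)^{\alpha-1}$ is now a fixed integrable function on $[0,1]$, and only the argument of $a$ depends on $t$. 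Given $\varepsilon>0$, I would fix a small $\delta>0$ and split the unit interval at $1-\delta$. On $[1-\delta,1]$, the crude bound $|a(\theta+s(t-\theta))-a(\theta)|\le 2\|a\|_{L^{\infty}}$ gives a contribution at most $2\|a\|_{L^{\infty}}\delta^{\alpha}/\alpha$, which is below $\varepsilon/2$ for all small enough $\delta$, uniformly in $t$. On $[0,1-\delta]$, the weight satisfies $(1-s)^{\alpha-1}\le \delta^{\alpha-1}$ because $\alpha-1<0$, whence reversing the substitution yields
\begin{equation*}
\int_{0}^{1-\delta}(1-s)^{\alpha-1}\bigl|a(\theta+s(t-\theta))-a(\theta)\bigr|\,ds
\le \frac{\delta^{\alpha-1}}{t-\theta}\int_{\theta}^{t}|a(u)-a(\theta)|\,du.
\end{equation*}
Since $\theta$ is by hypothesis a Lebesgue point of $a(\cdot)$, the averaged integral on the right tends to $0$ as $t\to\theta^{+}$, so with $\delta$ already fixed I can pick $t$ close enough to $\theta$ to force this piece below $\varepsilon/2$, completing the argument.

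The main subtlety I expect is the order of quantifiers: $\delta$ must be selected first (depending only on $\varepsilon$ and $\|a\|_{L^{\infty}}$) to absorb the singular tail at $s=1$, because the $L^{\infty}$ bound on $a$ is the only tool available there; only then can the Lebesgue point property be invoked with the fixed $\delta^{\alpha-1}$ multiplier in front. No uniform convergence in $s$ is available, so attempting a single-scale estimate or a direct application of dominated convergence would fail precisely at the weight singularity. The two-scale split is the essential idea, and it is the fractional-calculus analogue of the classical first-order Lebesgue differentiation statement recovered at $\alpha=1$.
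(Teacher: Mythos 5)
The paper does not actually prove this lemma: it is imported verbatim (and somewhat garbled) from reference [37], so there is no in-paper argument to compare yours against. Your reconstruction of the intended statement is the right one. With the literal extra factor $(t-\theta)^{\alpha-1}$ retained, the claim would be false for $\alpha<1$ (for an $a$ that is merely H\"older of exponent $\gamma<1-\alpha$ at $\theta$, the expression behaves like $(t-\theta)^{\gamma+\alpha-1}\to\infty$), and the only way the lemma is actually used in the paper --- to justify $\Delta y(t)=\tfrac{1}{\Gamma(\alpha+1)}(t-\theta)^{\alpha}\Delta_v f(\theta)+o((t-\theta)^{\alpha})$ in (4.16) --- requires exactly the version you prove, with a missing ``$=0$'' at the end. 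Your proof itself is correct and complete: the rescaling $\tau=\theta+s(t-\theta)$ does turn the prefactor $(t-\theta)^{-\alpha}$ and the Jacobian into an exact cancellation, leaving the fixed integrable weight $(1-s)^{\alpha-1}$; the split at $1-\delta$ with the crude $2\|a\|_{L^\infty}$ bound near the singularity (contributing $2\|a\|_{L^\infty}\delta^{\alpha}/\alpha$) and the bound $(1-s)^{\alpha-1}\le\delta^{\alpha-1}$ away from it, followed by the one-sided Lebesgue-point limit, is exactly the right mechanism, and you correctly identify that $\delta$ must be fixed before $t$ is sent to $\theta$. This is the standard argument one would expect in [37] as well, so nothing further is needed; if anything, you could remark that the case $\alpha=1$ degenerates to the ordinary Lebesgue differentiation statement with no splitting required.
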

\bigskip
	\section{Representation formula}
Let us examine the subsequent nonhomogeneous Cauchy problem, which involves a linear delayed differential equation with variable coefficients and incorporates the Caputo fractional derivatives.
	\begin{equation}\label{cp}
		\begin{cases}
			^c D^{\alpha}_{0+}y(t)=a(t)y(t)+b(t)y(t-\tau)+f(t), \quad a.e. t\in [0,T]\\
			y(0)=y_{0}, \\
			y(t)=\varphi(t);\quad  -h\leq t<0, \quad h>0.
		\end{cases}
	\end{equation}
   $\quad$ where $\quad a(\cdot) , b(\cdot) \in L^{\infty}\bigg([0,T],R^{n\times n}\bigg), f(\cdot) \in L^{\infty}\bigg([0,T],R^{n}\bigg)$, $\quad 0<\alpha<1.\\$
  $\quad$ This  problem has a unique solution $y(\cdot) \in AC^{\alpha}_{\infty}\bigg([0,T],R^{n}\bigg).\\$
  \bigskip
  \begin{lemma}
  	The solution of (\ref{cp}) has the following representation:
  	\begin{align*}
  		y(t)=\frac{1}{\Gamma(\alpha)} \int_{0}^{t} (t-\tau)^{\alpha-1} F(t,\tau)f(\tau) d\tau+ F_{1}(t)y_{0},
  	\end{align*}
  where
  \begin{align*}
  	&F(t,\tau)=E+\frac{(t-\tau)^{1-\alpha}}{\Gamma(\alpha)} \int_{\tau}^{t}(t-s)^{\alpha-1}(s-\tau)^{\alpha-1}F(t,s)a(s)ds\nonumber\\
  	+&\frac{(t-\tau)^{1-\alpha}}{\Gamma(\alpha)} \int_{\tau+h}^{t}(t-s)^{\alpha-1}(s-\tau-h)^{\alpha-1}F(t,s)b(s)ds.
  \end{align*}
  \end{lemma}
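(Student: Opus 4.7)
The natural starting point is to apply $I^{\alpha}_{0+}$ to both sides of (\ref{cp}); by the composition identity $I^{\alpha}_{0+}({}^{C}D^{\alpha}_{0+}y)(t)=y(t)-y(0)$ recalled above, this recasts the Cauchy problem as the Volterra integral equation
\begin{equation*}
y(t)=y_{0}+\frac{1}{\Gamma(\alpha)}\int_{0}^{t}(t-s)^{\alpha-1}\bigl[a(s)y(s)+b(s)y(s-h)+f(s)\bigr]\,ds,
\end{equation*}
with $y(s-h)$ interpreted via $\varphi$ on $[-h,0)$. All further work is carried out on this integral form rather than on the Caputo derivative itself.

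My strategy is a Neumann (resolvent) expansion. Setting $g(t)=y_{0}+(I^{\alpha}_{0+}f)(t)$ and $(Tz)(t)=\frac{1}{\Gamma(\alpha)}\int_{0}^{t}(t-s)^{\alpha-1}[a(s)z(s)+b(s)z(s-h)]\,ds$, the unique solution of the integral equation will be the Neumann series $y=\sum_{k\geq 0}T^{k}g$. I would compute the first iterate explicitly: applying Fubini to
\begin{equation*}
\frac{1}{\Gamma(\alpha)^{2}}\int_{0}^{t}(t-s)^{\alpha-1}a(s)\int_{0}^{s}(s-\tau)^{\alpha-1}f(\tau)\,d\tau\,ds,
\end{equation*}
the double integral collapses into a single integral against $f(\tau)$ carrying weight $\frac{1}{\Gamma(\alpha)}(t-\tau)^{\alpha-1}$ multiplied by $\frac{(t-\tau)^{1-\alpha}}{\Gamma(\alpha)}\int_{\tau}^{t}(t-s)^{\alpha-1}(s-\tau)^{\alpha-1}a(s)\,ds$; the analogous computation for the $b$-piece produces $\frac{(t-\tau)^{1-\alpha}}{\Gamma(\alpha)}\int_{\tau+h}^{t}(t-s)^{\alpha-1}(s-\tau-h)^{\alpha-1}b(s)\,ds$ on $\tau\in[0,t-h]$. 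Each further iteration reproduces exactly the same structure, with the already-accumulated kernel playing the role of $E$ inside the integral operator $L$ defined by the right-hand side of the claim, so that the partial sums satisfy the recursion $F^{(n+1)}(t,\tau)=E+(LF^{(n)}(t,\cdot))(\tau)$. The limit $F=\lim_{n}F^{(n)}$ is by construction a fixed point of the integral equation in the statement, and summing the Neumann series produces $y(t)=\frac{1}{\Gamma(\alpha)}\int_{0}^{t}(t-\tau)^{\alpha-1}F(t,\tau)f(\tau)\,d\tau+F_{1}(t)y_{0}$, with $F_{1}(t)$ arising in the same fashion from iteration on the $y_{0}$-part of $g$.

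The main obstacle is convergence of the iteration. One application of $L$ sandwiches a kernel $(t-s)^{\alpha-1}(s-\tau)^{\alpha-1}$, which by the Beta identity integrates to a multiple of $(t-\tau)^{2\alpha-1}$; the prefactor $(t-\tau)^{1-\alpha}$ baked into the claim is exactly what is required so that each iteration gains only a factor of order $(t-\tau)^{\alpha}/\Gamma(\alpha+1)$. Iterating $k$ times yields bounds of Mittag--Leffler type $\sim (t-\tau)^{k\alpha}/\Gamma(k\alpha+1)$, so the series converges absolutely and uniformly on $[0,T]$ and the limit kernel $F$ is bounded. The delay term and the contribution of $\varphi$ are then handled by the fractional delayed Gr\"onwall inequality of Lemma~\ref{Lmm}, which simultaneously yields uniqueness of $F$ (and of $y$) and thereby justifies the claimed representation.
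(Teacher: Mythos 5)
Your proposal is correct in substance but follows a genuinely different route from the paper. The paper works backwards by a multiplier (adjoint-kernel) method: it multiplies the equation by an undetermined kernel $\frac{1}{\Gamma(\alpha)}(t-\tau)^{\alpha-1}F(\tau)$ and an undetermined constant matrix $F_{1}$, integrates, substitutes $y(\tau)=y(0)+(I^{\alpha}_{0+}{}^{C}D^{\alpha}_{0+}y)(\tau)$ (and the shifted analogue for $y(\tau-h)$), and then interchanges the order of integration so that the coefficient of $({}^{C}D^{\alpha}_{0+}y)(\tau)$ can be forced to collapse to $E$; that forcing is precisely the integral equation for $F(t,\tau)$ in the statement, and the leftover $y(0)$-coefficient defines $F_{1}(t)$. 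You instead pass to the equivalent Volterra equation via $I^{\alpha}_{0+}$ and build the resolvent kernel constructively by a Neumann series, with the Beta-function composition and Mittag--Leffler-type bounds guaranteeing convergence. Your first-iterate Fubini computations do match the two kernels in the claim, and your approach buys something the paper omits: an actual existence and boundedness proof for $F(t,\tau)$, which the paper simply postulates as the solution of its integral equation. What the paper's route buys is a more direct explanation of why exactly that integral equation arises, without having to identify the sum of the series with a fixed point. One caveat applies to both arguments: the stated formula contains no contribution from the prehistory $\varphi$, and the paper's substitution for $y(\tau-h)$ silently treats $y(\sigma)=y(0)$ for $\sigma<0$ (i.e.\ $\varphi\equiv y_{0}$); your closing remark that the $\varphi$-contribution is ``handled by'' Lemma~\ref{Lmm} is the weakest step, since that Gr\"onwall-type lemma yields only estimates and uniqueness, not the missing representation term --- if $\varphi$ is nonconstant your forcing term $g$ must carry an explicit extra integral $\frac{1}{\Gamma(\alpha)}\int_{0}^{\min(t,h)}(t-s)^{\alpha-1}b(s)\varphi(s-h)\,ds$, which then propagates through the resolvent. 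Since the paper makes the same omission, this does not put you at a disadvantage, but you should state the convention explicitly rather than deferring to the Gr\"onwall lemma.
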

\begin{proof}
By performing the operation of multiplying both sides of equation (\ref{cp}) by $\frac{1}{\Gamma(\alpha)}(t-\tau)^{\alpha-1} F(\tau)$ and $F^1$ separately, and subsequently combining the resulting equations, we can readily deduce the following outcome.
\begin{align}\label{cp1}
	&\frac{1}{\Gamma(\alpha)} \int_{0}^{t} (t-\tau)^{\alpha-1} F(\tau) \Bigg[(^c D^{\alpha}_{0+}y)(\tau)-a(\tau)y(\tau)-b(\tau) y(\tau-h)\Bigg]d\tau+F_{1}y(0)\nonumber\\
	=&\frac{1}{\Gamma(\alpha)} \int_{0}^{t} (t-\tau)^{\alpha-1} F(\tau) f(\tau)d\tau+F_{1}y_{0}
\end{align}
Here, $F(t)$ remains an arbitrary $n\times n$ matrix-valued function that is integrable over the interval $[0,T],$ while $F_{1}$ is an arbitrary constant $n\times n$ matrix, $t \in (0,T]$ is a fixed point.$\\$
For a given representation
\begin{equation}\label{cp2}
	y(t)=y(0)+\frac{1}{\Gamma(\alpha)} \int_{0}^{t} (t-\tau)^{\alpha-1}(^c D^{\alpha}_{0+}y)(\tau)d\tau,
\end{equation}
We modify the left-hand side of equation (\ref{cp1}) using the following transformation.
\begin{align}\label{cp3}
	&\frac{1}{\Gamma(\alpha)} \int_{0}^{t} (t-\tau)^{\alpha-1} F(\tau) \Bigg[(^c D^{\alpha}_{0+}y)(\tau)-a(\tau)\bigg(y(0)+\frac{1}{\Gamma(\alpha)} \int_{0}^{\tau} (\tau-s)^{\alpha-1}(^c D^{\alpha}_{0+}y)(s)ds\bigg)\nonumber\\
	-&b(\tau)\biggl(\frac{1}{\Gamma(\alpha)} \int_{0}^{\tau-h} (\tau-h-s)^{\alpha-1}(^c D^{\alpha}_{0+}y)(s)ds+y(0)\biggr)\Bigg]d\tau+F_{1}y(0)\nonumber\\
	=&\frac{1}{\Gamma(\alpha)} \int_{0}^{t} (t-\tau)^{\alpha-1} \Bigg[F(\tau)-\frac{ (t-\tau)^{1-\alpha}}{\Gamma(\alpha)} \int_{\tau}^{t}  (t-s)^{\alpha-1}  (s-\tau)^{\alpha-1} F(s)a(s)ds\\
	-&\frac{(t-\tau)^{1-\alpha}}{\Gamma(\alpha)} \int_{\tau+h}^{t}(t-s)^{\alpha-1}(s-\tau-h)^{\alpha-1}F(s)b(s)ds\Bigg](^c D^{\alpha}_{0+}y)(\tau)d\tau\nonumber\\
	+&\bigg[F_{1}-\frac{1}{\Gamma(\alpha)} \int_{0}^{t} (t-\tau)^{\alpha-1} F(\tau)(a(\tau)+b(\tau))\bigg]y(0)=\frac{1}{\Gamma(\alpha)} \int_{0}^{t} (t-\tau)^{\alpha-1} F(\tau) f(\tau)d\tau+F_{1}y_{0}.\nonumber
\end{align}
In order to simplify this equation, it is necessary for the matrix-valued function $F(t)$ to satisfy the given equation
\begin{align}\label{cp4}
	&F(\tau)=E+\frac{(t-\tau)^{1-\alpha}}{\Gamma(\alpha)} \int_{\tau}^{t}(t-s)^{\alpha-1}(s-\tau)^{\alpha-1}F(s)a(s)ds\nonumber\\
	+&\frac{(t-\tau)^{1-\alpha}}{\Gamma(\alpha)} \int_{\tau+h}^{t}(t-s)^{\alpha-1}(s-\tau-h)^{\alpha-1}F(s)b(s)ds
\end{align}
and the matrix $F_{1}$ has a representation
\begin{equation}\label{cp5}
F_{1}=E+\frac{1}{\Gamma(\alpha)}\int_{0}^{t} (t-\tau)^{\alpha-1} F(\tau)(a(\tau)+b(\tau))d\tau
\end{equation}
where $E \in R^{n\times n}$ is a unit matrix.$\\$
Considering the representation (\ref{cp2}), equation (\ref{cp3}) can be expressed in the following form.

\begin{align}\label{cp6}
y(t)=\frac{1}{\Gamma(\alpha)}\int_{0}^{t} (t-\tau)^{\alpha-1} F(\tau) f(\tau)d\tau+F_{1}y(0)
\end{align}
It is crucial to acknowledge that the matrix-function $F(\tau)$ and the matrix $F_1$ are dependent on the variable $t$ as well. Hence, we express them as $F(t,\tau)$ and $F_1(t)$, respectively, with $t$ belonging to the interval $[0,T]$. Consequently, equations (\ref{cp4}) and (\ref{cp5}) can be rewritten as follows
\begin{align*}
&F(t,\tau)=E+\frac{(t-\tau)^{1-\alpha}}{\Gamma(\alpha)} \int_{\tau}^{t}(t-s)^{\alpha-1}(s-\tau)^{\alpha-1}F(t,s)a(s)ds\nonumber\\
+&\frac{(t-\tau)^{1-\alpha}}{\Gamma(\alpha)} \int_{\tau+h}^{t}(t-s)^{\alpha-1}(s-\tau-h)^{\alpha-1}F(t,s)b(s)ds
\end{align*}
and
\begin{align*}
	F_{1}(t)=E+\frac{1}{\Gamma(\alpha)} \int_{0}^{t}(t-\tau)^{\alpha-1} F(t,\tau)[a(\tau)+b(\tau)]d\tau 
\end{align*}
and representations (\ref{cp6}) can be written as
\begin{align*}
y(t)=\frac{1}{\Gamma(\alpha)} \int_{0}^{t} (t-\tau)^{\alpha-1} F(t,\tau)f(\tau) d\tau+ F_{1}(t)y_{0}.
\end{align*}
\end{proof}
\bigskip
	\section{Statement of main result}
	In this section, we address an optimal control problem where the dynamical system is governed by nonlinear delayed equations featuring fractional Caputo derivatives. The system is subject to Cauchy conditions.
		\begin{equation}\label{T}
		\begin{cases}
			^c D^{\alpha}_{0+}y(t)=f(t,y(t),y(t-h),u(t)), \quad a.e. t\in [0,T]\\
			y(0)=y_{0}, \\
			y(t)=\varphi(t); \quad -h\leq t<0,\quad  h>0.
		\end{cases}
	\end{equation}
Consider the optimal control problem, where $y(t)$ represents an $n$-dimensional vector of phase variables, $u(t)$ is a measurable and bounded control function on the interval $[0,T]$ with $u(t) \in U(t) \subset R^r$, $y_0 \in R^n$ is a fixed initial condition, and $0 < \alpha < 1$. 

In this problem, the objective is to minimize the expected cost functional defined as follows.

	\begin{align}\label{F}
		J(u)=\Phi(y(T))+\frac{1}{\Gamma(\beta)}\int_{0}^{T}(T-\tau)^{\beta-1}f_{0}(t,y(t),y(t-h),u(t))dt\longrightarrow min.
	\end{align}
An admissible pair is referred to as the pair $(y(t),u(t))$ that satisfies the constraints (\ref{T}). On the other hand, an optimal pair denotes the pair $(y(t),u(t))$ that serves as a solution to the problem (\ref{T})-(\ref{F}).
\bigskip
\subsection*{Problem.}
Minimize (\ref{F}) over $U(t).$ Any $u(\cdot)\in U(t)$ satisfying 
$$
J(u(\cdot))=\inf_{v\in U(t)}J(v(\cdot))
$$
is called an optimal control.

$\bullet$(C1) Let $d:=f, f_{0}$. There exists a constant $L>0$ such that
$$
\Vert d(t,y_{1},y^{\prime}_{1},u_{1})- d(t,y_{2},y^{\prime}_{2},u_{2})\Vert\\
\leq L(\Vert y_{1}-y_{2}\vert +\Vert y^{\prime}_{1}-y^{\prime}_{2}\Vert +\Vert u_{1}-u_{2}\Vert),
$$
$$
\Vert d(t,0,0,0)\Vert \leq L,\quad t\in [0,T], \quad y_{1},y_{2},y^{\prime}_{1},y^{\prime}_{2}\in R^{n}, \quad u_{1},u_{2}\in R^{r};
$$

$\bullet$(C2) Let $d:=f, f_{0}$ be continuously differentiable with respect to $x,y,u$, and be measureable with respect to  $t$. Morover, there exists a constant $L_{1}>0$ such that
$$
\Vert d_{y}(t,y_{1},y^{\prime}_{1},u_{1})-d_{y}(t,y_{2},y^{\prime}_{2},u_{2})\Vert
$$
$$
\vee \Vert d_{y^{\prime}}(t,y_{1},y^{\prime}_{1},u_{1})-d_{y^{\prime}}(t,y_{2},y^{\prime}_{2},u_{2})\Vert
$$
$$
\vee\Vert d_{u}(t,y_{1},y^{\prime}_{1},u_{1})-d_{y}(t,y_{2},y^{\prime}_{2},u_{2})\Vert
$$
$$
\leq L_{1}(\Vert y_{1}-y_{2}\vert +\Vert y^{\prime}_{1}-y^{\prime}_{2}\Vert +\Vert u_{1}-u_{2}\Vert),
$$
$$
\Vert d_{y}(t,0,0,0)\Vert \vee \Vert d_{y}(t,0,0,0)\Vert \vee \Vert d_{u}(t,0,0,0)\Vert\leq L_{1}, \quad t\in [0,T], \quad  y_{1},y_{2},y^{\prime}_{1},y^{\prime}_{2}\in R^{n}, \quad u_{1},u_{2}\in R^{r};
$$

$\bullet$ (C3) $\Phi$ is continuously differentiable and uniformly bounded;

$\bullet$ (C4) The set ${U(t): t\in[0,T] }$ is compact.
\bigskip
\begin{definition}
	The conjugate problem of the $(4.1)-(4.2)$ is said the following integral equation: 
	\begin{align*}
		&\Psi(t)=-\frac{\partial{\Phi(y(T))}}{\partial{y(T)}}+	\frac{(T-t)^{1-\alpha}}{\Gamma(\alpha)}\int_{t}^{T}(T-\tau)^{\alpha-1}(\tau-t)^{\alpha-1}\frac{\partial{H(\tau,y(\tau),y(\tau-h),u(\tau),\Psi)}}{\partial{y(\tau)}}d\tau\\
		+&\frac{(T-t)^{1-\alpha}}{\Gamma(\alpha)}\int_{t+h}^{T}(T-\tau)^{\alpha-1}(\tau-t-h)^{\alpha-1}\frac{\partial{H(\tau,y(\tau),y(\tau-h),u(\tau),\Psi)}}{\partial{y(\tau-h)}}d\tau, \quad t\in [0,T],\\
		&\Psi(t)=0, \quad t\in (T,T+h].
	\end{align*}
\end{definition}
\begin{lemma}\label{lem1}
Let $(u(t),y(t))$ be optimal pair for the problem $(\ref{T})-(\ref{F}).$ Then, itcrement of trajectory and functional is as follows:
\begin{align*}
	&\Delta J(u)=-\frac{1}{\Gamma(\alpha)}\int_{0}^{T}(T-t)^{\alpha-1}\Delta_{\tilde{u}}H(t,y(t),y(t-h),u(t),\Psi)dt\nonumber\\
	-&\frac{1}{\Gamma(\alpha)}\int_{0}^{T}(T-t)^{\alpha-1}\frac{\partial{\Delta_{\tilde{u}}H(t,y(t),y(t-h),u(t),\Psi))}}{\partial(y(t))}\Delta y(t)dt\nonumber\\
	-&\frac{1}{\Gamma(\alpha)}\int_{0}^{T}(T-t)^{\alpha-1}\frac{\partial{\Delta_{\tilde{u}}H(t,y(t),y(t-h),u(t),\Psi))}}{\partial(y(t-h))}\Delta y(t-h)dt\nonumber\\
	-&\frac{1}{\Gamma(\alpha)}\int_{0}^{T}(T-t)^{\alpha-1}\bigg[\frac{1}{2}\Delta y^{\prime}(t)\frac{\partial^{2}{H(t,y(t)+\mu_{2}\Delta y(t),y(t-h)+\mu_{2}\Delta y(t-h),u(t),\Psi)}}{\partial y^{2}(t)}\Delta y(t)\nonumber\\
	+&\frac{1}{2}\Delta y^{\prime}(t-h)\frac{\partial^{2}{H(t,y(t)+\mu_{2}\Delta y(t),y(t-h)+\mu_{2}\Delta y(t-h),u(t),\Psi)}}{\partial y^{2}(t-h)}\Delta y(t-h)\\
	+&\frac{1}{2}\Delta y^{\prime}(t)\frac{\partial^{2}{H(t,y(t)+\mu_{2}\Delta y(t),y(t-h)+\mu_{2}\Delta y(t-h),u(t),\Psi)}}{\partial y(t)\partial y(t-h)}\Delta y(t-h)\bigg]dt\nonumber\\
	-&\frac{1}{\Gamma(\alpha)}\int_{0}^{T}(T-t)^{\alpha-1}\bigg[\frac{1}{2}\Delta y^{\prime}(t)\frac{\partial^{2}{\Delta_{\tilde{u}}H(t,y(t)+\mu_{2}\Delta y(t),y(t-h)+\mu_{2}\Delta y(t-h),u(t),\Psi)}}{\partial y^{2}(t)}\Delta y(t)\nonumber\\
	+&\frac{1}{2}\Delta y^{\prime}(t-h)\frac{\partial^{2}{\Delta_{\tilde{u}}H(t,y(t)+\mu_{2}\Delta y(t),y(t-h)+\mu_{2}\Delta y(t-h),u(t),\Psi)}}{\partial^{2} y(t-h)}\Delta y(t-h)\nonumber\\
	+&\frac{1}{2}\Delta y^{\prime}(t)\frac{\partial^{2}{\Delta_{\tilde{u}}H(t,y(t)+\mu_{2}\Delta y(t),y(t-h)+\mu_{2}\Delta y(t-h),u(t),\Psi)}}{\partial y(t)\partial y(t-h)}\Delta y(t-h)\bigg]dt\nonumber\\	
	+&\frac{1}{2}\Delta y^{\prime}(T)\frac{\partial^{2}{\Phi(y(T)+\mu_{1}\Delta y(T))}}{\partial{y^{2}(T)}}\Delta y(T).\nonumber	
\end{align*}
\end{lemma}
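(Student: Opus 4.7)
The plan is to let $\widetilde u$ be an arbitrary admissible perturbation of the optimal control $u$, let $\widetilde y=y+\Delta y$ be the corresponding trajectory, and compute $\Delta J(u)=J(\widetilde u)-J(u)$ exactly, then rewrite the result using the adjoint function $\Psi$ prescribed by Definition 4.1. The argument proceeds by Taylor expansion of all nonlinearities followed by an integration-by-parts-type manipulation of fractional integrals that invokes Lemma 3.1 and the conjugate equation to absorb the linear-in-$\Delta y$ terms.

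First I would apply Taylor's formula with Lagrange remainder to $\Phi$ at $y(T)$, which produces a linear contribution $\Phi_y(y(T))\Delta y(T)$ and the quadratic remainder $\tfrac12\Delta y^{\prime}(T)\Phi_{yy}(y(T)+\mu_1\Delta y(T))\Delta y(T)$; this accounts for the final line of the claimed expression. For both $f$ and $f_0$ I would use the standard add-and-subtract decomposition
\begin{equation*}
d(t,\widetilde y,\widetilde y(t-h),\widetilde u)-d(t,y,y(t-h),u)
=\Delta_{\widetilde u}d(t,y,y(t-h),u)
+\bigl[d(t,\widetilde y,\widetilde y(t-h),\widetilde u)-d(t,y,y(t-h),\widetilde u)\bigr],
\end{equation*}
and Taylor-expand the second bracket in $(y,y(t-h))$ to second order about the optimal profile with intermediate point $y(t)+\mu_2\Delta y(t)$. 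This separates each difference into a $\Delta_{\widetilde u}$-piece, first-order state-derivative terms evaluated at the optimal control, their $\Delta_{\widetilde u}$-corrections, and a quadratic remainder; the quadratic remainders are already in the form of the last four integrals of the claim.

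Next, subtracting the state equations for $\widetilde y$ and $y$ shows that $\Delta y$ solves the linear nonhomogeneous Caputo delay Cauchy problem \eqref{cp} with variable coefficients $a(t)=f_y(t,y(t),y(t-h),u(t))$, $b(t)=f_{y(t-h)}(t,y(t),y(t-h),u(t))$, zero initial data, and forcing equal to $\Delta_{\widetilde u}f$ plus the $\Delta_{\widetilde u}$-corrections of $f_y,f_{y(t-h)}$ paired with $\Delta y,\Delta y(t-h)$, plus the quadratic remainder in $f$. Applying Lemma 3.1 expresses $\Delta y$ as an integral against the resolvent $F(\cdot,\tau)$. Substituting this representation into $\Phi_y(y(T))\Delta y(T)$ produces nested double integrals in $\tau$ and $s$ with kernels $(T-\tau)^{\alpha-1}(\tau-s)^{\alpha-1}$ and, for the delayed part, $(T-\tau)^{\alpha-1}(\tau-s-h)^{\alpha-1}$. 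A Fubini-type interchange of the order of integration turns the inner integral into precisely the right-hand side of the defining equation for $\Psi(s)$ in Definition 4.1, provided the Hamiltonian is taken as $H=\Psi^{\top}f-\tfrac{\Gamma(\alpha)}{\Gamma(\beta)}(T-t)^{\beta-\alpha}f_0$. Gathering every linear-in-$\Delta y$ contribution (from $\Phi_y\Delta y(T)$ and from the $f_0$-expansion) collapses them into $-\frac{1}{\Gamma(\alpha)}\int_0^T(T-t)^{\alpha-1}\Delta_{\widetilde u}H\,dt$, while the first-order $\Delta_{\widetilde u}$-corrections assemble into the two $\partial\Delta_{\widetilde u}H/\partial y$ and $\partial\Delta_{\widetilde u}H/\partial y(t-h)$ integrals shown in the statement.

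The main obstacle is the Fubini step of the previous paragraph, because the nested fractional kernels couple four variables and the delayed term forces splitting the outer $t$-integral at $T-h$: when $t>T-h$ the lower limit $t+h$ exceeds $T$ and the delayed kernel contributes nothing, which is exactly why Definition 4.1 stipulates $\Psi\equiv 0$ on $(T,T+h]$. Justifying the swap requires absolute integrability of the kernels, which follows from the boundedness and Lipschitz assumptions (C1)--(C2) on $f$, a Gronwall-type estimate on $\Vert\Delta y\Vert$ delivered by Lemma \ref{Lmm}, and the Beta-integral reduction of $(T-\tau)^{\alpha-1}(\tau-s)^{\alpha-1}$. Once the interchange is carried out and the resolvent $F(T,\cdot)^{\top}$ is matched with $\Psi$, the remainder of the proof is a purely algebraic regrouping of the Taylor coefficients into the form displayed in the statement.
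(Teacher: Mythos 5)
Your outline is essentially sound, but it follows a genuinely different route from the paper's. The paper does \emph{not} invoke the representation formula of Lemma 3.1 or the resolvent $F(t,\tau)$ at this stage. Instead it uses an increment method built directly on the adjoint: it adds to $\Delta J(u)$ the identically zero term
\begin{equation*}
\frac{1}{\Gamma(\alpha)}\int_{0}^{T}(T-t)^{\alpha-1}\Psi^{\prime}(t)\Bigl[({}^{c}D^{\alpha}_{0+}\Delta y)(t)-\Delta f(t,y(t),y(t-h),u(t))\Bigr]dt,
\end{equation*}
Taylor-expands $\Phi$ and the Hamiltonian $H=\Psi^{\prime}f-\tfrac{\Gamma(\alpha)}{\Gamma(\beta)}(T-t)^{\beta-\alpha}f_{0}$ to second order, substitutes $\Delta y(t)=\tfrac{1}{\Gamma(\alpha)}\int_{0}^{t}(t-\tau)^{\alpha-1}({}^{c}D^{\alpha}_{0+}\Delta y)(\tau)\,d\tau$ (and its delayed analogue) into the first-order terms, interchanges the order of integration to collect everything multiplying $({}^{c}D^{\alpha}_{0+}\Delta y)(t)$, and then \emph{defines} $\Psi$ by the conjugate integral equation \eqref{con} precisely so that this coefficient vanishes, leaving formula \eqref{F2}. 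Your route — representing $\Delta y(T)$ through the resolvent of Lemma 3.1 and matching $-F(T,\cdot)^{\top}\Phi_{y}(y(T))$ (corrected for the $f_{0}$ contribution) with $\Psi$ — is the fundamental-matrix method of reference [35], which the introduction explicitly contrasts with the present approach. It can be made to work, but it costs you an additional verification that the composite kernel you obtain after the Fubini step actually satisfies the conjugate equation of Definition 4.1 (this duality between \eqref{cp4} and \eqref{con} is asserted rather than proved in your sketch), and it forces you to handle the doubly singular nested kernels of the resolvent. The paper's method buys exactly the avoidance of that: the adjoint is introduced as a multiplier of the residual of the state equation, the only interchange of integrals needed involves the single kernel $(T-t)^{\alpha-1}(t-\tau)^{\alpha-1}$, and the resulting adjoint integral equation has a nonsingular continuous solution. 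Your treatment of the Taylor decomposition, of the quadratic Lagrange remainders with $\mu_{1},\mu_{2}$, of the surviving $\partial\Delta_{\tilde{u}}H/\partial y$ terms, and of the vanishing of the delayed kernel for $t>T-h$ (hence $\Psi\equiv 0$ on $(T,T+h]$) all agree with the paper.
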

\begin{proof}
	To establish the necessary conditions for optimal pairs, we must introduce specific perturbations to the control variable and ascertain the resulting variations in both the state and the cost functional. This process is analogous to determining the Taylor expansion of the state and cost in relation to the control perturbation.$\\$
Let {$y(t),u(t)$} and {$\tilde{u}(t)+\Delta u(t),\tilde{y}(t)+\Delta y(t)$} be two admissible process. It is clear that the increment $\Delta y(t)$ satisfies the problem 
\begin{equation}\label{T1}
	\begin{cases}
		^c D^{\alpha}_{0+}\Delta y(t)=\Delta f(t,y(t),y(t-h),u(t)), \quad a.e. t\in [0,T], \\
		\Delta y(0)=0, \\
		\Delta y(t)=0;\quad -h\leq t<0,\quad  h>0.
	\end{cases}
\end{equation}

where $\quad$	$\Delta f(t,y(t),y(t-h),u(t))=f(t,\tilde{y}(t),\tilde{y}(t-h),\tilde{u}(t))- f(t,y(t),y(t-h),u(t))$. We can write increment of functional in the form 

\begin{align*}
	\Delta J(u)=J(\tilde{u})-J(u)=\Delta \Phi(y(T))+\frac{1}{\Gamma(\beta)}\int_{0}^{T}(T-t)^{\beta-1}\Delta f_{0}(t,y(t),y(t-h),u(t))dt.
\end{align*}
where $
\Delta \Phi(y)=\Phi(\tilde{y})-\Phi(y)$, and $ \quad  \Delta f_{0}(t,y(t),y(t-h),u(t))=f_{0}(t,\tilde{y}(t),\tilde{y}(t-h),\tilde{u}(t))- f_{0}(t,y(t),y(t-h),u(t)).\\$
We introduce a nontrivial vector-function $\Psi(t) \in R^{n}$. By utilizing the Taylor formula, we can express the increment of the functional as follows
\begin{align*}
	&\Delta J(u)=\Delta \Phi(y(T))+\frac{1}{\Gamma(\beta)}\int_{0}^{T}(T-t)^{\beta-1}\Delta f_{0}(t,y(t),y(t-h),u(t))\\
	+&\frac{1}{\Gamma(\alpha)}\int_{0}^{T}(T-t)^{\alpha-1}\Psi^{,}(t)\bigg[ (^c D^{\alpha}_{0+}\Delta y)(t)-\Delta f(t,y(t),y(t-h),u(t))\bigg]dt\\
	=&\Delta \Phi(y(T))-\frac{1}{\Gamma(\alpha)}\int_{0}^{T}(T-t)^{\alpha-1}\Delta H(t,y(t),y(t-h),u(t),\Psi)dt\\
	+&\frac{1}{\Gamma(\alpha)}\int_{0}^{T}(T-t)^{\alpha-1}\Psi(t)\bigg( (^c D^{\alpha}_{0+}\Delta y)(t)\bigg)dt\\
	=&\frac{\partial{\Phi(y(T))}}{\partial{y(T)}}\Delta y(T)+\frac{1}{2}\Delta y^{\prime}(T)\frac{\partial^{2}{\Phi(y(T)+\mu_{1}\Delta y(T))}}{\partial{y^{2}(T)}}\Delta y(T)+\frac{1}{\Gamma(\alpha)}\int_{0}^{T}(T-t)^{\alpha-1}\Psi(t)\bigg( (^c D^{\alpha}_{0+}\Delta y)(t)\bigg)dt\\
	-&\frac{1}{\Gamma(\alpha)}\int_{0}^{T}\Delta_{u} H(t,y(t),y(t-h),u(t),\Psi)dt\\
	-&\frac{1}{\Gamma(\alpha)}\int_{0}^{T}(T-t)^{\alpha-1}\bigg[\frac{\partial{H(t,y(t),y(t-h),u(t),\Psi)}}{\partial y(t)}\Delta y(t)+\frac{\partial{H(t,y(t),y(t-h),u(t),\Psi)}}{\partial y(t-h)}\Delta y(t-h)\bigg]dt\\
	-&\frac{1}{\Gamma(\alpha)}\int_{0}^{T}(T-t)^{\alpha-1}\bigg[\frac{\partial{\Delta_{\tilde{u}}H(t,y(t),y(t-h),u(t),\Psi)}}{\partial y(t)}\Delta y(t)+\frac{\partial{\Delta_{\tilde{u}}H(t,y(t),y(t-h),u(t),\Psi)}}{\partial y(t-h)}\Delta y(t-h)\bigg]dt\\
	-&\frac{1}{\Gamma(\alpha)}\int_{0}^{T}(T-t)^{\alpha-1}\bigg[\frac{1}{2}\Delta y^{\prime}(t)\frac{\partial^{2}{H(t,y(t)+\mu_{2}\Delta y(t),y(t-h)+\mu_{2}\Delta y(t-h),u(t),\Psi)}}{\partial y^{2}(t)}\Delta y(t)\\
	+&\frac{1}{2}\Delta y^{\prime}(t-h)\frac{\partial^{2}{H(t,y(t)+\mu_{2}\Delta y(t),y(t-h)+\mu_{2}\Delta y(t-h),u(t),\Psi)}}{\partial y^{2}(t-h)}\Delta y(t-h)\\
	+&\frac{1}{2}\Delta y^{\prime}(t)\frac{\partial^{2}{H(t,y(t)+\mu_{2}\Delta y(t),y(t-h)+\mu_{2}\Delta y(t-h),u(t),\Psi)}}{\partial y(t)\partial y(t-h)}\Delta y(t-h)\bigg]dt\\
	-&\frac{1}{\Gamma(\alpha)}\int_{0}^{T}(T-t)^{\alpha-1}\bigg[\frac{1}{2}\Delta y^{\prime}(t)\frac{\partial^{2}{\Delta_{\tilde{u}}H(t,y(t)+\mu_{2}\Delta y(t),y(t-h)+\mu_{2}\Delta y(t-h),u(t),\Psi)}}{\partial y^{2}(t)}\Delta y(t)\\
	+&\frac{1}{2}\Delta y^{\prime}(t-h)\frac{\partial^{2}{\Delta_{\tilde{u}}H(t,y(t)+\mu_{2}\Delta y(t),y(t-h)+\mu_{2}\Delta y(t-h),u(t),\Psi)}}{\partial^{2} y(t-h)}\Delta y(t-h)\\
	+&\frac{1}{2}\Delta y^{\prime}(t)\frac{\partial^{2}{\Delta_{\tilde{u}}H(t,y(t)+\mu_{2}\Delta y(t),y(t-h)+\mu_{2}\Delta y(t-h),u(t),\Psi)}}{\partial y(t)\partial y(t-h)}\Delta y(t-h)\bigg]dt.\\
\end{align*}
Note that 
$$
H(t,y,y_{h},u,\Psi))=\Psi^{'}f(t,y,y_{h},u)-\frac{\Gamma(\alpha)}{\Gamma(\beta)}(T-t)^{\beta-\alpha}f_{0}(t,y,y_{h},u).
$$
Using relations
\begin{align*}
	\Delta y(t)=\frac{1}{\Gamma(\alpha)} \int_{0}^{t} (t-\tau)^{\alpha-1}(^c D^{\alpha}_{0+}\Delta y)(\tau)d\tau, 
\end{align*}
and
\begin{align*}
	\Delta y(t-h)=\frac{1}{\Gamma(\alpha)} \int_{0}^{t-h} (t-h-\tau)^{\alpha-1}(^c D^{\alpha}_{0+}\Delta y)(\tau)d\tau
\end{align*}
we obtain that
\begin{align}\label{F1}
	&\Delta J(u)=\frac{1}{\Gamma(\alpha)}\int_{0}^{T}(T-t)^{\alpha-1}\bigg[\frac{\partial{\Phi(y(T))}}{\partial{y(T)}}+\Psi^{,}(t)\nonumber\\
	-&\frac{(T-t)^{1-\alpha}}{\Gamma(\alpha)}\int_{t}^{T}(T-t)^{\alpha-1} (t-\tau)^{\alpha-1}\frac{\partial{H(\tau,y(\tau),y(\tau-h),u(\tau),\Psi)}}{\partial{y(\tau)}}d\tau\nonumber\\
	-&\frac{(T-t)^{1-\alpha}}{\Gamma(\alpha)}\int_{t+h}^{T}(T-t)^{\alpha-1}(t-\tau-h)^{\alpha-1}\frac{\partial{H(\tau,y(\tau),y(\tau-h),u(\tau),\Psi)}}{\partial{y(\tau-h)}}d\tau\bigg](^c D^{\alpha}_{0+}\Delta y)(t)dt\nonumber\\
	-&\frac{1}{\Gamma(\alpha)}\int_{0}^{T}(T-t)^{\alpha-1}\Delta_{\tilde{u}}H(t,y(t),y(t-h),u(t),\Psi)dt\nonumber\\
	-&\frac{1}{\Gamma(\alpha)}\int_{0}^{T}(T-t)^{\alpha-1}\bigg[\frac{\partial{\Delta_{\tilde{u}}H(t,y(t),y(t-h),u(t),\Psi)}}{\partial y(t)}\Delta y(t)+\frac{\partial{\Delta_{\tilde{u}}H(t,y(t),y(t-h),u(t),\Psi)}}{\partial y(t-h)}\Delta y(t-h)\bigg]dt\nonumber\\
	-&\frac{1}{\Gamma(\alpha)}\int_{0}^{T}(T-t)^{\alpha-1}\bigg[\frac{1}{2}\Delta y^{\prime}(t)\frac{\partial^{2}{H(t,y(t)+\mu_{2}\Delta y(t),y(t-h)+\mu_{2}\Delta y(t-h),u(t),\Psi)}}{\partial y^{2}(t)}\Delta y(t)\nonumber\\
	+&\frac{1}{2}\Delta y^{\prime}(t-h)\frac{\partial^{2}{H(t,y(t)+\mu_{2}\Delta y(t),y(t-h)+\mu_{2}\Delta y(t-h),u(t),\Psi)}}{\partial y^{2}(t-h)}\Delta y(t-h)\\
	+&\frac{1}{2}\Delta y^{\prime}(t)\frac{\partial^{2}{H(t,y(t)+\mu_{2}\Delta y(t),y(t-h)+\mu_{2}\Delta x(t-h),u(t),\Psi)}}{\partial y(t)\partial y(t-h)}\Delta y(t-h)\bigg]dt\nonumber\\
	-&\frac{1}{\Gamma(\alpha)}\int_{0}^{T}(T-t)^{\alpha-1}\bigg[\frac{1}{2}\Delta y^{\prime}(t)\frac{\partial^{2}{\Delta_{\tilde{u}}H(t,y(t)+\mu_{2}\Delta y(t),y(t-h)+\mu_{2}\Delta y(t-h),u(t),\Psi)}}{\partial y^{2}(t)}\Delta y(t)\nonumber\\
	+&\frac{1}{2}\Delta y^{\prime}(t-h)\frac{\partial^{2}{\Delta_{\tilde{u}}H(t,y(t)+\mu_{2}\Delta y(t),y(t-h)+\mu_{2}\Delta y(t-h),u(t),\Psi)}}{\partial^{2} y(t-h)}\Delta y(t-h)\nonumber\\
	+&\frac{1}{2}\Delta y^{\prime}(t)\frac{\partial^{2}{\Delta_{\tilde{u}}H(t,y(t)+\mu_{2}\Delta y(t),y(t-h)+\mu_{2}\Delta y(t-h),u(t),\Psi)}}{\partial y(t)\partial y(t-h)}\Delta y(t-h)\bigg]dt\nonumber\\
	+&\frac{1}{2}\Delta y^{\prime}(T)\frac{\partial^{2}{\Phi(y(T)+\mu_{1}\Delta y(T))}}{\partial{y^{2}(T)}}\Delta y(T), \quad where\quad 0<\mu_{i}<1, \quad i=1,2.\nonumber
\end{align}
Next, we impose the condition that the vector-function $\Psi:[0,T]\to R^{n},$ satisfies the following integral equation
\begin{align}\label{con}
	&\Psi(t)=-\frac{\partial{\Phi(y(T))}}{\partial{y(T)}}+	\frac{(T-t)^{1-\alpha}}{\Gamma(\alpha)}\int_{t}^{T}(T-\tau)^{\alpha-1}(\tau-t)^{\alpha-1}\frac{\partial{H(\tau,y(\tau),y(\tau-h),u(\tau),\Psi)}}{\partial{y(\tau)}}d\tau\\
	+&\frac{(T-t)^{1-\alpha}}{\Gamma(\alpha)}\int_{t+h}^{T}(T-\tau)^{\alpha-1}(\tau-t-h)^{\alpha-1}\frac{\partial{H(\tau,y(\tau),y(\tau-h),u(\tau),\Psi)}}{\partial{y(\tau-h)}}d\tau, \quad t\in [0,T],\nonumber\\
	&\Psi(t)=0, \quad t\in (T,T+h].\nonumber
\end{align}
The problem stated in equation (\ref{con}) is referred to as a conjugated problem. The integral equation (\ref{con}) possesses a single continuous solution. By considering the equalities expressed in (\ref{con}) within (\ref{F1}), we obtain an auxiliary formula for determining the increment of the functional.
\begin{align}\label{F2}
	&\Delta J(u)=-\frac{1}{\Gamma(\alpha)}\int_{0}^{T}(T-t)^{\alpha-1}\Delta_{\tilde{u}}H(t,y(t),y(t-h),u(t),\Psi)dt\nonumber\\
	-&\frac{1}{\Gamma(\alpha)}\int_{0}^{T}(T-t)^{\alpha-1}\frac{\partial{\Delta_{\tilde{u}}H(t,y(t),y(t-h),u(t),\Psi)}}{\partial(y(t))}\Delta y(t)dt\nonumber\\
	-&\frac{1}{\Gamma(\alpha)}\int_{0}^{T}(T-t)^{\alpha-1}\frac{\partial{\Delta_{\tilde{u}}H(t,y(t),y(t-h),u(t),\Psi)}}{\partial(y(t-h))}\Delta y(t-h)dt\nonumber\\
	-&\frac{1}{\Gamma(\alpha)}\int_{0}^{T}(T-t)^{\alpha-1}\bigg[\frac{1}{2}\Delta y^{\prime}(t)\frac{\partial^{2}{H(t,y(t)+\mu_{2}\Delta y(t),y(t-h)+\mu_{2}\Delta y(t-h),u(t),\Psi)}}{\partial y^{2}(t)}\Delta y(t)\nonumber\\
	+&\frac{1}{2}\Delta y^{\prime}(t-h)\frac{\partial^{2}{H(t,y(t)+\mu_{2}\Delta y(t),y(t-h)+\mu_{2}\Delta y(t-h),u(t),\Psi)}}{\partial y^{2}(t-h)}\Delta y(t-h)\\
	+&\frac{1}{2}\Delta y^{\prime}(t)\frac{\partial^{2}{H(t,y(t)+\mu_{2}\Delta y(t),y(t-h)+\mu_{2}\Delta y(t-h),u(t),\Psi)}}{\partial y(t)\partial y(t-h)}\Delta y(t-h)\bigg]dt\nonumber\\
	-&\frac{1}{\Gamma(\alpha)}\int_{0}^{T}(T-t)^{\alpha-1}\bigg[\frac{1}{2}\Delta y^{\prime}(t)\frac{\partial^{2}{\Delta_{\tilde{u}}H(t,y(t)+\mu_{2}\Delta y(t),y(t-h)+\mu_{2}\Delta y(t-h),u(t),\Psi)}}{\partial y^{2}(t)}\Delta y(t)\nonumber\\
	+&\frac{1}{2}\Delta y^{\prime}(t-h)\frac{\partial^{2}{\Delta_{\tilde{u}}H(t,y(t)+\mu_{2}\Delta y(t),y(t-h)+\mu_{2}\Delta y(t-h),u(t),\Psi)}}{\partial^{2} y(t-h)}\Delta y(t-h)\nonumber\\
	+&\frac{1}{2}\Delta y^{\prime}(t)\frac{\partial^{2}{\Delta_{\tilde{u}}H(t,y(t)+\mu_{2}\Delta y(t),y(t-h)+\mu_{2}\Delta y(t-h),u(t),\Psi)}}{\partial y(t)\partial y(t-h)}\Delta y(t-h)\bigg]dt\nonumber\\	
	+&\frac{1}{2}\Delta y^{\prime}(T)\frac{\partial^{2}{\Phi(y(T)+\mu_{1}\Delta y(T))}}{\partial{y^{2}(T)}}\Delta y(T).\nonumber	
\end{align}
\end{proof}
\bigskip
\begin{lemma}\label{lem}
	Under the assumptions of Lemma \ref{lem1} and $(C1)-(C4),$ the following estimate holds for trajectory increment on the spike variation:
	\begin{align*}
		\Vert\Delta y(t)\Vert\leq B\varepsilon,
	\end{align*}
	where   $\quad B=M\bigg(\frac{K_{1}(\alpha,\varepsilon)}{\Gamma(\alpha+1)}(t-(\theta+\varepsilon))^{\alpha-1}+\frac{MK(v)}{\Gamma(\alpha+1)}\varepsilon^{\alpha-1}\bigg).$
\end{lemma}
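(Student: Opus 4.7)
The plan is to start from the incremental problem \eqref{T1} satisfied by $\Delta y(\cdot)$, invert the Caputo derivative via Lemma~2.1 to get an integral representation for $\Delta y$, separate the control-dependent and state-dependent parts of $\Delta f$, and then close the estimate with the delayed Gronwall inequality of Lemma~\ref{Lmm}.

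First, since $\Delta y(0)=0$ and $(^cD^{\alpha}_{0+}\Delta y)(t)=\Delta f(t,y(t),y(t-h),u(t))$, applying $I^{\alpha}_{0+}$ to both sides yields
\begin{equation*}
\Delta y(t)=\frac{1}{\Gamma(\alpha)}\int_{0}^{t}(t-\tau)^{\alpha-1}\Delta f(\tau,y(\tau),y(\tau-h),u(\tau))\,d\tau.
\end{equation*}
I then decompose the increment as $\Delta f=\bigl[f(\tau,\tilde y,\tilde y_h,\tilde u)-f(\tau,y,y_h,\tilde u)\bigr]+\bigl[f(\tau,y,y_h,\tilde u)-f(\tau,y,y_h,u)\bigr]$. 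By assumption $(C1)$ the first bracket is bounded by $L\bigl(\Vert\Delta y(\tau)\Vert+\Vert\Delta y(\tau-h)\Vert\bigr)$. For a spike (needle) variation of size $\varepsilon$ concentrated on $[\theta,\theta+\varepsilon]$ with perturbed value $v\in U(\theta)$, the second bracket vanishes off the spike interval and is pointwise bounded there by a constant $K(v)$ that depends only on the Lipschitz constant $L$, on $\Vert v\Vert$, and on the $L^{\infty}$-bound of $u$ (finite by $(C1),(C4)$).

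Taking norms and splitting the integral accordingly gives
\begin{equation*}
\Vert\Delta y(t)\Vert\leq \frac{L}{\Gamma(\alpha)}\int_{0}^{t}(t-\tau)^{\alpha-1}\bigl(\Vert\Delta y(\tau)\Vert+\Vert\Delta y_{h}\Vert_{C}\bigr)d\tau+\frac{K(v)}{\Gamma(\alpha)}\int_{\theta}^{\min(t,\theta+\varepsilon)}(t-\tau)^{\alpha-1}d\tau.
\end{equation*}
The last integral is explicit: for $t\in[\theta,\theta+\varepsilon]$ it equals $(t-\theta)^{\alpha}/\Gamma(\alpha+1)\le \varepsilon^{\alpha}/\Gamma(\alpha+1)$, and for $t>\theta+\varepsilon$ it is bounded by $\varepsilon(t-\theta-\varepsilon)^{\alpha-1}/\Gamma(\alpha)$ by the mean value theorem applied to $(t-\cdot)^{\alpha}$. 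Denoting this free term by $a(t)$, one reads off $a(t)\le C\,\varepsilon\bigl[(t-\theta-\varepsilon)^{\alpha-1}+\varepsilon^{\alpha-1}\bigr]$ which matches the structure appearing inside $B$ in the statement, up to the grouping of constants in $K_{1}(\alpha,\varepsilon)$ and $K(v)$.

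Finally, I apply Lemma~\ref{Lmm} with $\beta=\alpha$, $\psi\equiv 0$, $b=c=L/\Gamma(\alpha)$, and $a(t)$ as above; the lemma produces a constant $M>0$, independent of $v,\theta,\varepsilon$, such that $\Vert\Delta y(t)\Vert\leq M\,a(t)$, which upon regrouping gives $\Vert\Delta y(t)\Vert\leq B\varepsilon$ with $B$ as stated. The main obstacle is the technical bookkeeping around the spike endpoint: the kernel $(t-\tau)^{\alpha-1}$ is singular at $\tau=t$, so one must handle the two regimes $t\in[\theta,\theta+\varepsilon]$ and $t>\theta+\varepsilon$ simultaneously and verify that both contributions can be absorbed into a single expression matching $B$. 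The delayed term $\Vert\Delta y(\tau-h)\Vert$ is the reason we need the $c$-term in Lemma~\ref{Lmm}, and using the $\Vert\Delta y_{h}\Vert_{C}$-supremum avoids any difficulty with the history interval $[-h,0]$ since $\Delta y\equiv 0$ there.
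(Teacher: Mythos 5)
Your argument is correct and follows essentially the same route as the paper: the same integral representation of $\Delta y$, the same splitting of $\Delta f$ into a Lipschitz part controlled by $(C1)$ and a control-jump part of size $K(v)$ supported on the spike, and the same closing step via the delayed fractional Gronwall inequality of Lemma~\ref{Lmm}, with the free term $a(t)$ exhibiting exactly the two regimes $\varepsilon^{\alpha}$ on $[\theta,\theta+\varepsilon]$ and $\varepsilon\,(t-\theta-\varepsilon)^{\alpha-1}$ beyond it. The only (harmless) organizational difference is that you run a single Gronwall pass over $[\theta,T]$ with a piecewise free term, whereas the paper first derives the intermediate bound $\Vert\Delta y(t)\Vert\leq \frac{MK(v)}{\Gamma(\alpha+1)}(t-\theta)^{\alpha}$ on the spike interval and then feeds it into a second Gronwall application on $[\theta+\varepsilon,T]$.
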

\begin{proof}
	In order to obtain effectively verifiable necessary optimal conditions, consider the following variation of $u(t):$
	\begin{align}\label{I}
		\Delta_{v}u(t)=
		\begin{cases}
			0,       & \quad t \notin [\theta,\theta+\varepsilon) \\
			v-u(t),          & \quad t\in  [\theta,\theta+\varepsilon).
		\end{cases}
	\end{align}
	where $v \in U, \varepsilon>0$ is a sufficiently small parameter, $\theta \in [0,T),\quad \theta+\varepsilon<T,$ is the Lebesgue point of the functions $t\to f(t,y(t),y(t-h),u(t)),f_{y}(t,y(t),y(t-h),u(t)).$
	For all values of $\varepsilon>0$ that are sufficiently small, the control $u_{\varepsilon}(t)$ defined as $u_{\varepsilon}(t)=u(t)+\Delta_{\varepsilon}u(t)$ is considered to be an admissible control. This control, obtained by perturbing the original control $u(t)$, is valid for all $v\in U.$
	\begin{equation}\label{T2}
		\begin{cases}
			^c D^{\alpha}_{0+}\Delta y(t)=f(t,y(t)+\Delta y(t),y(t-h)+\Delta y(t-h),u(t))-f(t,y(t),y(t-h),u(t))\, \\
			\Delta y(t)=0, \quad -h\leq t\leq \theta, \theta+\varepsilon\leq T.\\
		\end{cases}
	\end{equation}
	As a consequence of $\Delta y(t)=0$, it can be deduced from equation (\ref{T2}) that the function
	$$\Delta y(t)\equiv0 \quad 0\leq t\leq \theta$$
	serves as the only solution to equation (\ref{T2}) within the interval $[0,\theta]$. 
	\begin{align}\label{T3}
		^c D^{\alpha}_{0+}\Delta y(t)=f(t,y(t)+\Delta y(t),y(t-h)+\Delta y(t-h),v)-f(t,y(t),y(t-h),u(t)),
		\quad \theta \leq t\leq \theta+\varepsilon.
	\end{align}
	Hence, the initial condition for equation (\ref{T3}) takes the form $\Delta y(\theta)=0$. Consequently, equation (\ref{T3}) can be expressed in integral form.    
	\begin{align}\label{s}
		\Delta y(t)=\frac{1}{\Gamma(\alpha)}\int_{\theta}^{t}(t-\tau)^{\alpha-1}[f(\tau,y(\tau)+\Delta y(\tau),y(\tau-h)+\Delta y(\tau-h),v)-f(\tau,y(\tau),y(\tau-h),u(\tau))]d\tau. 
	\end{align}
	According to the assumption $C1$, there exists a constant $L>0$ such that 
	\begin{equation}\label{L} 
		\Vert f(t,y(t)+\Delta y(t),y(t-h)+\Delta y(t-h),v)-f(t,y(t),y(t-h),u(t))\Vert\leq L[\Vert\Delta y(t)\Vert+	\Vert\Delta y(t-h)\Vert].
	\end{equation}
	from equation (\ref{s}) we obtain
	\begin{align}\label{s1}
		\Vert\Delta y(t)\Vert\leq\frac{1}{\Gamma(\alpha)}\int_{\theta}^{t}(t-\tau)^{\alpha-1}\Vert f(\tau,y(\tau)+\Delta y(\tau),y(\tau-h)+\Delta y(\tau-h),v)-f(\tau,y(\tau),y(\tau-h),u(\tau))\Vert d\tau\nonumber\\
		\leq \frac{L}{\Gamma(\alpha)}\int_{\theta}^{t}(t-\tau)^{\alpha-1}[\Vert\Delta y(\tau)\Vert+\Vert\Delta y(\tau-h)\Vert_{C}]d\tau+\frac{L}{\Gamma(\alpha)}\int_{\theta}^{t}(t-\tau)^{\alpha-1}\Vert\Delta_{v}f(\tau,y(\tau),y(\tau-h),u(\tau)) \Vert d\tau
	\end{align}
	where $\quad \Delta_{v}f(\tau,y(\tau),y(\tau-h),u(\tau)) =f(\tau,y(\tau),y(\tau-h),v)-f(\tau,y(\tau),y(\tau-h),u).\\$
	Denoting $$\esssup_{t \in [0,T]}\Vert\Delta_{v}f(\tau,y(\tau),y(\tau-h),u(\tau))\Vert=K(v),$$ the inequality (4.12) can be expressed in the following form. 
	\begin{align}\label{s2}
		\Vert\Delta y(t)\Vert\leq\frac{K(v)}{\Gamma(\alpha+1)}(t-\theta)^{\alpha}+\frac{L}{\Gamma(\alpha)}\int_{\theta}^{t}(t-\tau)^{\alpha-1}[\Vert\Delta y(\tau)\Vert+\Vert\Delta y(\tau-h)\Vert_{C}]d\tau.
	\end{align}
	\begin{align*}
		\Delta y(\theta)=0
	\end{align*}
	By applying Lemma \ref{Lmm} (Gronwall's inequality) to equation (\ref{s2}), we can conclude that for all $0 < \varepsilon \leq \tilde{\varepsilon}$ and $\theta \leq t \leq \theta + \varepsilon$, the following inequality holds.
	\begin{align}\label{s3}
		\Vert\Delta y(t)\Vert\leq\frac{MK(v)}{\Gamma(\alpha+1)}(t-\theta)^{\alpha} 
	\end{align}
	holds.$\\$
	Let us examine the behavior of the function $\Delta y(t)$ on the interval $[\theta+\varepsilon,T]$. This can be described by the integral equation
	\begin{align*}
		\Delta y(t)=\frac{1}{\Gamma(\alpha)}\int_{\theta}^{\theta+\varepsilon}(t-s)^{\alpha-1}[f(s,y(s)+\Delta y(s),y(s-h)+\Delta y(s-h),v)-f(s,y(s),y(s-h),u(s))]ds\\
		+\frac{1}{\Gamma(\alpha)}\int_{\theta+\varepsilon}^{t}(t-s)^{\alpha-1}[f(s,y(s)+\Delta y(s),y(s-h)+\Delta y(s-h),u(s))-f(s,y(s),y(s-h),u(s))]ds.
	\end{align*}
	To simplify the analysis, assuming the Lipschitz constant remains constant, we obtain the following result
	\begin{align*}
		&\Vert\Delta y(t)\Vert\leq\frac{K(v)}{\Gamma(\alpha)}\int_{\theta}^{\theta+\varepsilon}(t-s)^{\alpha-1}ds+\frac{L}{\Gamma(\alpha)}\int_{\theta}^{\theta+\varepsilon}(t-s)^{\alpha-1}\Vert\Delta y(s)\Vert ds\\
		+&\frac{L}{\Gamma(\alpha)}\int_{\theta}^{\theta+\varepsilon}(t-s)^{\alpha-1}\Vert\Delta y(s-h)\Vert_{C} ds+\frac{L}{\Gamma(\alpha)}\int_{\theta+\varepsilon}^{t}(t-s)^{\alpha-1}\Vert\Delta y(s)\Vert ds\\
		+&\frac{L}{\Gamma(\alpha)}\int_{\theta+\varepsilon}^{t}(t-s)^{\alpha-1}\Vert\Delta y(s-h)\Vert_{C} ds\leq\frac{K(v)}{\Gamma(\alpha)}\int_{\theta}^{\theta+\varepsilon}(t-s)^{\alpha-1}ds\\
		+&\frac{L}{\Gamma(\alpha)}\frac{MK(v)}{\Gamma(\alpha+1)}\int_{\theta}^{\theta+\varepsilon}(t-s)^{\alpha-1}(s-\theta)^{\alpha} ds	+\frac{L}{\Gamma(\alpha)}\frac{MK(v)}{\Gamma(\alpha+1)}\int_{\theta}^{\theta+\varepsilon}(t-s)^{\alpha-1}(s-\theta-h)^{\alpha} ds\\
		+&\frac{L}{\Gamma(\alpha)}\int_{\theta+\varepsilon}^{t}(t-s)^{\alpha-1}\Vert\Delta y(s)\Vert ds+\frac{L}{\Gamma(\alpha)}\int_{\theta+\varepsilon}^{t}(t-s)^{\alpha-1}\Vert\Delta y(s-h)\Vert_{C} ds
	\end{align*}
	Substituting (\ref{s3}) into the above expression, we obtain
	\begin{align*}
		&\Vert\Delta y(t)\Vert\leq\frac{K_{1}(\alpha,\varepsilon)\varepsilon}{\Gamma(\alpha+1)}(t-(\theta+\varepsilon))^{\alpha-1}+\frac{L}{\Gamma(\alpha)}\int_{\theta+\varepsilon}^{t}(t-s)^{\alpha-1}\Vert\Delta y(s)\Vert ds\\
		+&\frac{L}{\Gamma(\alpha)}\int_{\theta+\varepsilon}^{t}(t-s)^{\alpha-1}\Vert\Delta y(s-h)\Vert_{C} ds
	\end{align*}
	\begin{align*}
		\Vert\Delta y(\theta+\varepsilon)\Vert\leq\frac{MK(v)}{\Gamma(\alpha+1)}\varepsilon^{\alpha}
	\end{align*}
	Hence , by Lemma \ref{Lmm}, we have
	\begin{align*}
		\Vert\Delta y(t)\Vert\leq M\bigg(\frac{K_{1}(\alpha,\varepsilon)\varepsilon}{\Gamma(\alpha+1)}(t-(\theta+\varepsilon))^{\alpha-1}+\frac{MK(v)}{\Gamma(\alpha+1)}\varepsilon^{\alpha}\bigg)=B\varepsilon
	\end{align*}
\end{proof}
\begin{lemma}
	Under assumptions of the Lemma \ref{lem}, increment of functional on the spike variation is as follows:
	\begin{align*}
		&\Delta J(u)=-\frac{1}{\Gamma(\alpha)}\int_{\theta}^{\theta+\varepsilon}(T-t)^{\alpha-1}\Delta_{v}H(t)dt\\	-&\frac{\varepsilon^{\alpha+1}}{\Gamma(\alpha)\Gamma(\alpha+1)}\bigg[(T-\theta)^{\alpha-1}\Delta_{v}H_{y}(\theta)\Delta_{v}f(\theta)+(T-\theta-h)^{\alpha-1}\Delta_{v}H_{y_{h}}(\theta)\Delta_{v}f(\theta)\bigg]+o(\varepsilon^{1+\alpha}).
	\end{align*} 
\end{lemma}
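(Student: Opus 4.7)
The plan is to take the exact identity (F2) established inside the proof of Lemma \ref{lem1} and expand each summand asymptotically under the needle variation $\Delta_v u$ of (4.10). The crucial structural observation is that $\Delta_v u$ vanishes off $[\theta,\theta+\varepsilon)$, and hence so does $\Delta_{\tilde u}H(t,y(t),y(t-h),u(t),\Psi)$ together with its partial derivatives in $y(t)$ and $y(t-h)$. Consequently each of the three first-order integrals in (F2) collapses to an integral over the spike interval, and the first of them is retained verbatim as the leading term $-\frac{1}{\Gamma(\alpha)}\int_\theta^{\theta+\varepsilon}(T-t)^{\alpha-1}\Delta_v H(t)\,dt$ of the claim.

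For the $\partial_y\Delta_{\tilde u}H$-cross integral I would insert the sharp local profile of $\Delta y$ that emerges inside the proof of Lemma \ref{lem}: from the integral equation (4.9), the Lipschitz bound (4.11) and the Lebesgue-point hypothesis on $t\mapsto f(t,y(t),y(t-h),u(t))$ at $\theta$ one has
\[
\Delta y(t)=\tfrac{\Delta_v f(\theta)}{\Gamma(\alpha+1)}(t-\theta)^{\alpha}+o((t-\theta)^{\alpha})\qquad\text{uniformly on }[\theta,\theta+\varepsilon].
\]
Replacing $\Delta_v H_y(t)$ by $\Delta_v H_y(\theta)$ up to a Lemma \ref{Lmm2}-type correction and applying Lemma \ref{Lmm1} to $\int_\theta^{\theta+\varepsilon}(T-t)^{\alpha-1}(t-\theta)^{\alpha}\,dt$ reproduces the first bracketed summand $(T-\theta)^{\alpha-1}\Delta_v H_y(\theta)\Delta_v f(\theta)\varepsilon^{\alpha+1}$ (up to the gamma prefactor). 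The $\partial_{y_h}\Delta_{\tilde u}H$-cross integral is handled in parallel: the change of variables $t\mapsto t+h$ matches the support of $\partial_{y_h}\Delta_{\tilde u}H$ with the spike profile of $\Delta y(\cdot-h)$ near the shifted Lebesgue point $\theta+h$, and a second application of Lemma \ref{Lmm1} now with the translated kernel $(T-t-h)^{\alpha-1}$ yields the $(T-\theta-h)^{\alpha-1}\Delta_v H_{y_h}(\theta)\Delta_v f(\theta)$ term.

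The remaining summands of (F2) --- the quadratic-in-$\Delta y$ terms built from $H_{yy}$, $H_{y_hy_h}$, $H_{yy_h}$ and their $\Delta_{\tilde u}$-analogues, and the endpoint expression $\tfrac12\Delta y^{\prime}(T)\Phi^{\prime\prime}(y(T)+\mu_1\Delta y(T))\Delta y(T)$ --- are all pointwise $O(\varepsilon^{2})$ thanks to the uniform estimate $\|\Delta y(t)\|\le B\varepsilon$ of Lemma \ref{lem}. Integrating against the integrable kernel $(T-t)^{\alpha-1}\,dt$ on $[0,T]$ therefore produces an $O(\varepsilon^{2})$ contribution, which is $o(\varepsilon^{1+\alpha})$ since $0<\alpha<1$; every such term is thus absorbed into the stated remainder. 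The main obstacle will be the delayed cross term: $\partial_{y_h}\Delta_{\tilde u}H$ is supported on $[\theta,\theta+\varepsilon]$ while the non-trivial part of $\Delta y(\cdot-h)$ lives on the shifted interval $[\theta+h,\theta+h+\varepsilon]$, so the two must be aligned cleanly via $t\mapsto t+h$ before Lemma \ref{Lmm1} can be invoked on the translated kernel; a secondary technicality is ensuring that the $o$-corrections coming from the $\Delta y$ asymptotic and from Lemma \ref{Lmm1} combine uniformly into a single $o(\varepsilon^{1+\alpha})$ error after integration.
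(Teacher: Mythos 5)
Your proposal follows essentially the same route as the paper: expand (\ref{F2}) under the needle variation, insert the local profile $\Delta y(t)=\frac{1}{\Gamma(\alpha+1)}(t-\theta)^{\alpha}\Delta_{v}f(\theta)+o((t-\theta)^{\alpha})$ obtained at the Lebesgue point, evaluate the two cross integrals (the delayed one after the shift $t\mapsto t+h$) with Lemma \ref{Lmm1}, and absorb the quadratic terms into the remainder. The one place where your justification is weaker than it looks is the claim that the quadratic terms are ``pointwise $O(\varepsilon^{2})$'' by the estimate of Lemma \ref{lem}: the constant $B$ there is not uniform --- it contains $(t-(\theta+\varepsilon))^{\alpha-1}$ and $\varepsilon^{\alpha-1}$ --- so $\|\Delta y(t)\|^{2}$ is only of order $\varepsilon^{2\alpha}$ on the spike and carries a singular factor off it, and the integrated remainder requires the more careful bookkeeping that the paper itself delegates to \cite{37} in asserting $|\xi|\leq K\varepsilon^{1+\nu}$, $\nu>0$.
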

\begin{proof}
	By utilizing the integral representation
	\begin{align*}
		\Delta y(t)=\frac{1}{\Gamma(\alpha)} \int_{\theta}^{t} (t-\tau)^{\alpha-1}[f(\tau,y(\tau)+\Delta y(\tau),,y(\tau-h)+\Delta y(\tau-h),v)-f(\tau,y(\tau),y(\tau-h),u(\tau))]d\tau,
	\end{align*}
	and (lemma \ref{Lmm2}), the increment of the trajectory on the interval $[\theta,\theta+\varepsilon],$ can be 
	\begin{align}\label{s4}
			\Delta y(t)=\frac{1}{\Gamma(\alpha+1)}(t-\theta)^{\alpha}\Delta_{v}f(\theta)+o((t-\theta)^{\alpha}).
	\end{align}
	By considering equations (\ref{s4}) and (\ref{I}) and incorporating them into equation (\ref{F2}), we derive the following result.
	\begin{align}\label{F3}
		&\Delta J(u)=-\frac{1}{\Gamma(\alpha)}\int_{\theta}^{\theta+\varepsilon}(T-t)^{\alpha-1}\Delta_{v}H(t,y(t),y(t-h),u(t),\Psi)dt\nonumber\\
		-&\frac{1}{\Gamma(\alpha)\Gamma(\alpha+1)}\int_{\theta}^{\theta+\varepsilon}(T-t)^{\alpha-1}(t-\theta)^{\alpha}\frac{\partial{\Delta_{v}H(t,y(t),y(t-h),u(t),\Psi)}}{\partial y(t)}\Delta_{v}f(\theta)dt\\
		-&\frac{1}{\Gamma(\alpha)\Gamma(\alpha+1)}\int_{\theta}^{\theta+\varepsilon}(T-t-h)^{\alpha-1}(t-\theta)^{\alpha}\frac{\partial{\Delta_{v}H(t+h,y(t+h),y(t),u(t+h),\Psi(t+h))}}{\partial y(t)}\Delta_{v}f(\theta)dt+\xi\nonumber
	\end{align}
	where
	\begin{align*}
		\xi=	-&\frac{1}{2\Gamma(\alpha)}\int_{\theta}^{\theta+\varepsilon}(T-t)^{\alpha-1}\bigg[\Delta y^{\prime}(t)\frac{\partial^{2}{H(t,y(t)+\mu_{2}\Delta y(t),y(t-h)+\mu_{2}\Delta y(t-h),u(t),\Psi)}}{\partial y^{2}(t)}\Delta y(t)\\
		+&\Delta y^{\prime}(t-h)\frac{\partial^{2}{H(t,y(t)+\mu_{2}\Delta y(t),y(t-h)+\mu_{2}\Delta y(t-h),u(t),\Psi)}}{\partial y^{2}(t-h)}\Delta y(t-h)\\
		+&\Delta y^{\prime}(t)\frac{\partial^{2}{H(t,y(t)+\mu_{2}\Delta y(t),y(t-h)+\mu_{2}\Delta y(t-h),u(t),\Psi)}}{\partial y(t)\partial y(t-h)}\Delta y(t-h)\bigg]dt\\
		-&\frac{1}{2\Gamma(\alpha)}\int_{\theta+\varepsilon}^{T}(T-t)^{\alpha-1}\bigg[\Delta y^{\prime}(t)\frac{\partial^{2}{H(t,y(t)+\mu_{2}\Delta y(t),y(t-h)+\mu_{2}\Delta y(t-h),u(t),\Psi)}}{\partial y^{2}(t)}\Delta y(t)\\
		+&\Delta y^{\prime}(t-h)\frac{\partial^{2}{H(t,y(t)+\mu_{2}\Delta y(t),y(t-h)+\mu_{2}\Delta y(t-h),u(t),\Psi)}}{\partial y^{2}(t-h)}\Delta y(t-h)\\
		+&\Delta y^{\prime}(t)\frac{\partial^{2}{H(t,y(t)+\mu_{2}\Delta y(t),y(t-h)+\mu_{2}\Delta y(t-h),u(t),\Psi)}}{\partial y(t)\partial y(t-h)}\Delta y(t-h)\bigg]dt\\
		-&\frac{1}{\Gamma(\alpha)}\int_{\theta}^{\theta+\varepsilon}(T-t)^{\alpha-1}\bigg[\frac{1}{2}\Delta y^{\prime}(t)\frac{\partial^{2}{\Delta_{\tilde{u}}H(t,y(t)+\mu_{2}\Delta y(t),y(t-h)+\mu_{2}\Delta y(t-h),u(t),\Psi)}}{\partial y^{2}(t)}\Delta y(t)\\
		+&\Delta y^{\prime}(t-h)\frac{\partial^{2}{\Delta_{\tilde{u}}H(t,y(t)+\mu_{2}\Delta y(t),y(t-h)+\mu_{2}\Delta y(t-h),u(t),\Psi)}}{\partial^{2} y(t-h)}\Delta y(t-h)\\
		+&\Delta y^{\prime}(t)\frac{\partial^{2}{\Delta_{\tilde{u}}H(t,y(t)+\mu_{2}\Delta y(t),y(t-h)+\mu_{2}\Delta y(t-h),u(t),\Psi)}}{\partial y(t)\partial y(t-h)}\Delta y(t-h)\bigg]dt\\
		+&\frac{1}{2}\Delta y^{\prime}(T)\frac{\partial^{2}{\Phi(y(T)+\mu_{1}\Delta y(T))}}{\partial{y^{2}(T)}}\Delta y(T).
	\end{align*}
	by the same way in \cite{37}, it follows 
	\begin{align}\label{F4}
		\vert \xi\vert\leq K\varepsilon^{1+\nu}, \quad \nu>0.
	\end{align}
	it implies
	\begin{align}\label{F5}
		-&\frac{1}{\Gamma(\alpha)\Gamma(\alpha+1)}\int_{\theta}^{\theta+\varepsilon}(T-t)^{\alpha-1}(t-\theta)^{\alpha}\frac{\partial{\Delta_{v}H(t,y(t),y(t-h),u(t),\Psi)}}{\partial y(t)}\Delta_{v}f(\theta)dt\nonumber\\
		-&\frac{1}{\Gamma(\alpha)\Gamma(\alpha+1)}\int_{\theta}^{\theta+\varepsilon}(T-t-h)^{\alpha-1}(t-\theta)^{\alpha}\frac{\partial{\Delta_{v}H(t,y(t),y(t-h),u(t),\Psi)}}{\partial y(t-h)}\Delta_{v}f(\theta)dt\\
		=-&\frac{\varepsilon^{\alpha+1}}{\Gamma(\alpha)\Gamma(\alpha+1)}\bigg[(T-\theta)^{\alpha-1}\Delta_{v}H_{y}(\theta)\Delta_{v}f(\theta)+(T-\theta-h)^{\alpha-1}\Delta_{v}H_{y_{h}}(\theta+h)\Delta_{v}f(\theta)\bigg]+o(\varepsilon^{1+\alpha}).\nonumber
	\end{align}
	After substituting the estimates (\ref{F4}) and (\ref{F5}) into the expression (\ref{F3}) representing the increment of the functional, we obtain the following result
	\begin{align}\label{F6}
		&\Delta J(u)=-\frac{1}{\Gamma(\alpha)}\int_{\theta}^{\theta+\varepsilon}(T-t)^{\alpha-1}\Delta_{v}H(t)dt\nonumber\\	-&\frac{\varepsilon^{\alpha+1}}{\Gamma(\alpha)\Gamma(\alpha+1)}\bigg[(T-\theta)^{\alpha-1}\Delta_{v}H_{y}(\theta)\Delta_{v}f(\theta)+(T-\theta-h)^{\alpha-1}\Delta_{v}H_{y_{h}}(\theta+h)\Delta_{v}f(\theta)\bigg]+o(\varepsilon^{1+\alpha}).
	\end{align}
\end{proof} 
\subsection*{First ans second order necessary conditions.}
\bigskip
\begin{theorem}\label{ll} 
Suppose that the admissible process $(u(t),y(t))$ is optimal for the problem (\ref{T})-(\ref{F}), and let $\Psi$ be a solution to the conjugate problem (\ref{con}) on the optimal process. Assuming that assumptions (C1)-(C4) are satisfied, then for almost every $t \in [0,T]$, the following equality is satisfied:
\begin{align}\label{H}
\max_{v\in U}H(t,y(t),y(t-h),v,\Psi(t))=H(t,y(t),y(t-h),u(t),\Psi(t)).
\end{align}
\end{theorem}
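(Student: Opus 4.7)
The plan is to combine the asymptotic expansion of the functional increment on a spike variation (Lemma 4.3, equation (\ref{F6})) with the necessary condition $\Delta J(u) \geq 0$ that follows from optimality, and then isolate the pointwise maximum condition by sending $\varepsilon \to 0^+$.

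First, I fix an arbitrary $v \in U$ and choose any $\theta \in [0,T)$ that is simultaneously a Lebesgue point of $t\mapsto f(t,y(t),y(t-h),u(t))$, of $t \mapsto f_{0}(t,y(t),y(t-h),u(t))$, and of the associated maps appearing in (\ref{F6}); the complement of such points has Lebesgue measure zero. I then form the needle variation $u_{\varepsilon}$ as in (\ref{I}) with parameters $(\theta,\varepsilon,v)$, which is admissible by (C4). Since $(u,y)$ is optimal, $\Delta J(u) \geq 0$ for every sufficiently small $\varepsilon > 0$.

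Next, I insert the expansion (\ref{F6}) into this inequality. The first integral is of order $\varepsilon$, the explicit second-order term carries a factor $\varepsilon^{\alpha+1}$, and the remainder is $o(\varepsilon^{\alpha+1})$; since $\alpha > 0$, every subleading contribution is $o(\varepsilon)$. Dividing by $\varepsilon > 0$ and using the Lebesgue point property to take the limit as $\varepsilon \to 0^{+}$ yields
\[
-\frac{(T-\theta)^{\alpha-1}}{\Gamma(\alpha)}\,\Delta_{v} H(\theta,y(\theta),y(\theta-h),u(\theta),\Psi(\theta)) \geq 0.
\]
Because $(T-\theta)^{\alpha-1}/\Gamma(\alpha) > 0$ for $\theta < T$, this rearranges to $H(\theta,y(\theta),y(\theta-h),v,\Psi(\theta)) \leq H(\theta,y(\theta),y(\theta-h),u(\theta),\Psi(\theta))$, which is precisely (\ref{H}) at the chosen $\theta$ and $v$.

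To finish, I need the inequality to hold for almost every $\theta$ \emph{simultaneously} in $v \in U$. The standard route is to pick a countable dense subset $\{v_{k}\}_{k\geq 1} \subset U$ (available by compactness, hence separability, of $U$), apply the argument above to each $v_{k}$, and intersect the resulting countable family of null sets of bad $\theta$'s. The inequality at a common good $\theta$ then extends from $\{v_{k}\}$ to all of $U$ by continuity of $H$ in its fourth argument, which follows from the Lipschitz continuity of $f$ and $f_{0}$ in $u$ furnished by (C1). The main technical obstacle I foresee is precisely this passage from a $v$-pointwise statement to a uniform-in-$v$ a.e.\ statement: Lemma 4.3 and the Lebesgue differentiation argument give the inequality cleanly for each fixed $v$, but swapping quantifiers requires both the separability/continuity step and a verification that the $o(\varepsilon^{1+\alpha})$ remainder in (\ref{F6}) is uniform in $v \in U$, which in turn relies on the uniform Lipschitz bound (C1) together with compactness of $U$.
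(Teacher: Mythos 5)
Your proposal is correct and follows essentially the same route as the paper: the paper's entire proof of Theorem \ref{ll} is the single sentence that one divides the right-hand side of (\ref{F6}) by $\varepsilon$ and lets $\varepsilon \to 0^{+}$, which is exactly your core step. You merely supply the details the paper leaves implicit — the optimality inequality $\Delta J(u)\geq 0$, the order-of-magnitude bookkeeping showing the second-order and remainder terms are $o(\varepsilon)$, the Lebesgue-point limit, and the countable-dense-subset argument for passing from fixed $v$ to all $v\in U$ almost everywhere.
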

\bigskip
\begin{proof}
When we divide the right side of (\ref{F6}) by $\varepsilon$ and let $\varepsilon$ approach zero from the positive side, we obtain (\ref{H}).
\end{proof}
\bigskip
\begin{definition}
An admissible control $u(t)$ is considered to be singular according to the Pontryagin maximum principle if, within the process ${u(t),y(t)}$, the maximum condition (\ref{T1}) is trivially satisfied on a subset $T_0 \subset [0,T]$. In other words, it implies that
\begin{align}\label{H1}
	\Delta_v H(t) = 0 \quad \text{for almost every} \quad t \in T_0 \quad \text{and} \quad v \in U_0.
\end{align}
where $U_{0}\not=\lbrace u(t)\rbrace,$ $t\in T_{0}$ is a subset of $U.$
\end{definition}
For simplicity of presentation, we assume that $U_{0}=U$ and $T_{0}=[0,T].$
\begin{theorem}\label{ln}
	In order for a control $u(t)$ to be optimal according to the Pontryagin maximum principle, in the context of the problem (\ref{T})-(\ref{F}), it is necessary that the following inequality holds:
	\begin{align}\label{H2}
	(T-t)^{\alpha-1}\Delta_{v}H_{y}(t)\Delta_{v}f(t)+(T-t-h)^{\alpha-1}\Delta_{v}H_{y_{h}}(t+h)\Delta_{v}f(t)\leq0,
	\end{align}
	holds for all $v\in U$ and a.e.\quad $t\in [0,T).$
\end{theorem}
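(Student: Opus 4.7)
The plan is to read off the result directly from the increment formula (\ref{F6}) established in the previous lemma, exploiting the fact that the singularity hypothesis kills the first-order piece.

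First, I would invoke the hypothesis that $u(t)$ is singular in the sense of Pontryagin, so by (\ref{H1}) we have $\Delta_v H(t) = 0$ for a.e.\ $t \in [0,T]$ and every $v \in U$. Consequently the leading term in (\ref{F6}),
\[
-\frac{1}{\Gamma(\alpha)}\int_{\theta}^{\theta+\varepsilon}(T-t)^{\alpha-1}\Delta_{v}H(t)\,dt,
\]
vanishes identically (for every admissible spike parameter $\varepsilon$ and Lebesgue point $\theta$). Thus (\ref{F6}) simplifies to
\[
\Delta J(u) = -\frac{\varepsilon^{\alpha+1}}{\Gamma(\alpha)\Gamma(\alpha+1)}\Bigl[(T-\theta)^{\alpha-1}\Delta_{v}H_{y}(\theta)\Delta_{v}f(\theta) + (T-\theta-h)^{\alpha-1}\Delta_{v}H_{y_{h}}(\theta+h)\Delta_{v}f(\theta)\Bigr] + o(\varepsilon^{1+\alpha}).
\]

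Next, I would use optimality: since $(u,y)$ is optimal, $\Delta J(u) \geq 0$ for every admissible spike variation constructed in (\ref{I}). Dividing the displayed equation by $\varepsilon^{\alpha+1}>0$, taking $\varepsilon \to 0^{+}$ (so that $o(\varepsilon^{1+\alpha})/\varepsilon^{1+\alpha}\to 0$), and multiplying through by the positive constant $\Gamma(\alpha)\Gamma(\alpha+1)$ yields
\[
-(T-\theta)^{\alpha-1}\Delta_{v}H_{y}(\theta)\Delta_{v}f(\theta) - (T-\theta-h)^{\alpha-1}\Delta_{v}H_{y_{h}}(\theta+h)\Delta_{v}f(\theta) \geq 0,
\]
which, after transposition and relabeling the arbitrary Lebesgue point $\theta$ as $t$, is exactly inequality (\ref{H2}). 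Since $\theta$ ranges over a set of full measure in $[0,T)$ and $v \in U$ is arbitrary, the conclusion follows for a.e.\ $t \in [0,T)$ and all $v \in U$.

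The argument is almost entirely mechanical once (\ref{F6}) is in hand; there is no serious obstacle. The only point that deserves care is the justification that the remainder is genuinely $o(\varepsilon^{1+\alpha})$ rather than merely $O(\varepsilon^{1+\alpha})$ — this is where the exponent $\nu>0$ in the estimate (\ref{F4}) must in fact satisfy $\nu>\alpha$, a fact already absorbed into the derivation of (\ref{F6}) in the preceding lemma via the trajectory bound of Lemma \ref{lem}. Assuming that ingredient, the passage to the limit is clean and the second-order condition drops out without any additional machinery.
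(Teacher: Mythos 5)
Your proposal is correct and follows essentially the same route as the paper: substitute the singularity condition $\Delta_v H(t)=0$ into the increment formula (\ref{F6}) to obtain the reduced expansion, then use $\Delta J(u)\geq 0$, divide by $\varepsilon^{\alpha+1}$, and let $\varepsilon\to 0^{+}$ at an arbitrary Lebesgue point $\theta$. Your added remark that the remainder must genuinely be $o(\varepsilon^{1+\alpha})$ (i.e.\ that the exponent $\nu$ in (\ref{F4}) must exceed $\alpha$) is a point the paper glosses over, but otherwise the arguments coincide.
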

\bigskip
\begin{proof} 
Now, considering  $\Delta_v H(t) = 0 $ in expression (\ref{F6}), we get
	\begin{align}\label{F7}
	&\Delta J(u)=\nonumber\\
	-&\frac{\varepsilon^{\alpha+1}}{\Gamma(\alpha)\Gamma(\alpha+1)}\bigg[(T-\theta)^{\alpha-1}\Delta_{v}H_{y}(\theta)\Delta_{v}f(\theta)+(T-\theta-h)^{\alpha-1}\Delta_{v}H_{y_{h}}(\theta+h)\Delta_{v}f(\theta)\bigg]+o(\varepsilon^{1+\alpha}).
\end{align}
dividing the right side of (\ref{F7}) by $\varepsilon^{\alpha+1}$ and let  $\varepsilon$ approach zero from the positive side, we get (\ref{H2}).
\end{proof}
\bigskip
\begin{example}
	Consider the problem
	\begin{align*}
		&(^c D^{\alpha}_{0+}y)(t)=Ay(t-h)+Bu(t),\quad t\in [0,1],\\
		&y(t)=0,  \quad -h\leq t\leq0, \quad h=\frac{1}{2},\\
		&\vert u(t)\vert\leq 1, \quad  J(u)=y_{1}(1)\longrightarrow min,\\
	\end{align*}
where 
\begin{align*}
	A=\begin{bmatrix}
		0&1\\  0&0
	\end{bmatrix}
\quad  B=\begin{bmatrix}
	1\\  1
\end{bmatrix}.
\end{align*}
We analyze the optimality of the control input $u(t)=0$. This particular control choice corresponds to the solution $y(t)=0$ of the equation and Hamiltonian of the problem is as follows
\begin{align*}
	&H(t,y(t),y(t-h),u(t),\psi)=\langle \psi,f(t,y(t),y(t-h),u(t))\rangle =\psi_{1}(y_{1}(t-h)+u(t))+\psi_{2} u(t),\\
	&where \quad \psi=(\psi{_1},\psi_{2}) \quad and \quad y(t-h)=(y_{1}(t-h),y_{2}(t-h)).
\end{align*}
 During the progression denoted by $(0.0)$, we obtain the following outcome
\begin{align*}
	\psi_{1}(t)=-1, \quad \psi_{2}(t)=-\frac{\Gamma (\alpha)}{\Gamma (2\alpha)}(1-t)^{1-\alpha}(1-t-h)^{2\alpha-1},\quad t\in\bigg[0,\frac{1}{2}\bigg]
\end{align*}
According to the theorem \ref{ll}, we have
\begin{align*}
	-\bigg(1+\frac{\Gamma (\alpha)}{\Gamma (2\alpha)}(1-t)^{1-\alpha}(1-t-h)^{2\alpha-1}\bigg)v\leq0, \quad v\in [-1,1],
\end{align*}
that is not possible for all $v\in [-1,0).$ Therefore, $u(t)=0$ is not optimal control.
\end{example}
\bigskip
\begin{example}
	Consider the problem
\begin{align*}
	&(^c D^{\alpha}_{0+}y)(t)=[A+By_{1}(t-h)]u(t),\quad t\in [0,1],\\
	&y(t)=0,\quad t\in[-h,0]  \quad h=\frac{1}{2},\\
	&\vert u(t)\vert\leq 1, \quad  J(u)=y_{2}(1)\longrightarrow min,\\
\end{align*}
\end{example}
where 
\begin{align*}
	A=\begin{bmatrix}
		1\\  0
	\end{bmatrix}
	\quad  B=\begin{bmatrix}
		0\\  -1
	\end{bmatrix}.
\end{align*}
We are examining the effectiveness of the control input $u(t) = 0$ and assessing its optimality. This specific choice of control corresponds to the solution $y(t) = 0$ of the equation. The Hamiltonian of the problem can be expressed as follows.
\begin{align*}
	&H(t,y(t),y(t-h),u(t),\psi)=\langle \psi,f(t,y(t),y(t-h),u(t))\rangle =\psi_{1}u(t)-\psi_{2} u(t)y_{1}(t-h),\\
	&where \quad \psi=(\psi{_1},\psi_{2}) \quad and \quad y(t-h)=(y_{1}(t-h),y_{2}(t-h)).
\end{align*}
 Along the process $(0.0)$, we observe the following result.
\begin{align*}
	\psi_{1}(t)=0, \quad \psi_{2}(t)=-1,\quad and \quad H=0.
\end{align*}
Hence, the control $u(t) = 0$ is categorized as a singular control. We will now verify the satisfaction of the condition in equation (\ref{H2}). By examining equation (\ref{H2}), we can deduce that $v^2 \leq 0$ for all $v \in [-1, 1]$, which is not feasible for any $v \neq 0$. Consequently, the control $u(t) = 0$ is not considered optimal.

Clearly, when applying the control $u(t) = 0$, the value of the performance measure is $J(t) = y_2(1) = 0$. Now, let's examine whether there exists another control function that results in objective functional values less than zero. We will calculate the value of $J$ for the admissible control $u(t) = -\frac{1}{2}.$ For this function
\begin{align*}
	y_{1}(t)=-\frac{t^{\alpha}}{2\Gamma(\alpha+1)},
\end{align*}
\begin{align*}
	y_{2}(t)=-\frac{(t-h)^{2\alpha}}{4\Gamma(2\alpha+1)}.
\end{align*}
Then we have 
\begin{align*}
	J\bigg(-\frac{1}{2}\bigg)=-\frac{1}{2^{2\alpha+2}\Gamma(2\alpha+1)}<0=J(0).
\end{align*}
This demonstrates that the choice of control $u(t)=0$ for $ t\in[0,1],$ is not optimal.
\bigskip

\section{Conclusion}
By employing an updated iteration technique, we successfully derive a first-order condition for optimality,
represented through the Pontryagin maximum principle, for the given fractional time-delay optimal control
problem. Furthermore, in situations where the Pontryagin maximum principle degenerates, we also obtain a
second-order necessary condition for optimality. As a result, we anticipate that this approach can be extended
and generalized to address fractional optimal control problems involving neutral fractional equations, partial
fractional differential equations, along with their respective boundary conditions, in future research.
           
\end{document}